\newcommand\blfootnote[1]{%
  \begingroup
  \renewcommand\thefootnote{}\footnote{#1}%
  \addtocounter{footnote}{-1}%
  \endgroup
}
\def\XXint#1#2#3{{\setbox0=\hbox{$#1{#2#3}{\int}$ }
\vcenter{\hbox{$#2#3$ }}\kern-.6\wd0}}
\theoremstyle{definition}
\newtheorem{thm}{Theorem}
\newtheorem{prop}{Proposition}[section]
\newtheorem{lem}[prop]{Lemma}
\newtheorem{con}{Conjecture}
\newtheorem{rem}[prop]{Remark}
\newtheorem{cor}[prop]{Corollary}
\newtheorem*{xrem}{Remark}
\newcommand{\e}{\varepsilon}
\newcommand{\E}{\mathbb{E}}
\newcommand{\R}{\mathbb{R}}
\newcommand{\N}{\mathbb{N}}
\newcommand{\Prb}{\mathbb{P}}
\newcommand{\1}{\mathbf{1}}
\newcommand{\sign}{\text{sign}}
\title{Stability of Khintchine inequalities with optimal constants between the second and the $p$-th moment for $p \ge 3$}
\author{Jacek Jakimiuk}
\date{}
\begin{document}
    \maketitle{}

    \begin{abstract}
        We give a strengthening of the classical Khintchine inequality between the second and the $p$-th moment for $p \ge 3$ with optimal constant by adding a deficit depending on the vector of coefficients of the Rademacher sum. 
        \blfootnote{\noindent\textit{Key words:} convex functions, Khintchine inequality, Rademacher sum, Schur order.
    
        \textit{2020 Mathematics Subject Classification:} Primary 60E15; Secondary 26A51, 26D15, 60G50.}
    \end{abstract}

    \section{Introduction}

    The classical Khintchine inequality is the inequality of the form $\|S\|_p \le C_{p, q}\|S\|_q$, where $S$ is a Rademacher sum, i.e. $S = \sum_{i=1}^na_i\e_i$ for i.i.d. Rademacher random variables $\e_1, \ldots, \e_n$, $\Prb(\e_i = 1) = \Prb(\e_i = -1) = \frac 1 2$, and any real numbers $a_1, \ldots, a_n$. Here $p > q > 0$ and $C_{p, q} > 0$ is a universal constant depending only on $p$ and $q$, while $\|X\|_r = \left(\E|X|^r\right)^{\frac 1 r}$ stands for the $r$-th moment of a random variable $X$. Research on these inequalities originates form Khintchine \cite{Kh} and Littlewood \cite{L}.

    The natural problem of finding the optimal value of constant $C_{p, q}$ has been studied for several decades. Khintchine found the optimal value of $C_{p, 2}$ for even integers $p$ without stating the optimality, see \cite{Kh}. Optimal $C_{p, 2}$ for $p \ge 3$ were found by Whittle \cite{W} and independently by Young \cite{Y}, with later alternative proofs and extensions in \cite{H,P,FHJSZ}. Optimal $C_{2, 1}$ was originally found by Szarek \cite{S} and the proof was simplified in \cite{H,LO,O}. Haagerup in \cite{H} found the optimal constants $C_{p, 2}$ for all $p > 2$ and $C_{2, p}$ for all $0 < p < 2$. His proof was simplified for $0 < p < 2$ by Nazarov and Podkorytov \cite{NP} and for $2 < p < 3$ by Mordhorst \cite{M}. Optimal $C_{p, q}$ for even integers $p > q$ was found by Czerwi\'nski \cite{C} in the case when $q$ divides $p$ and by Nayar and Oleszkiewicz \cite{NO} in the general case. In all known cases it turns out that $C_{p, q} = \min\left\{\frac{\|G\|_p}{\|G\|_q}, \frac{\|S_2\|_p}{\|S_2\|_q}\right\}$, where $G$ is a standard Gaussian variable and $S_2 = \frac{\e_1 + \e_2}{\sqrt{2}}$. In the cases where $C_{p, q}$ is the Gaussian constant the optimality is asymptotic, i.e. there is no vector of coefficients for which the equality is attained, but there is a sequence of vectors achieving the equality in the limit. Then one may ask for the optimal constant $C_{p, q, n}$ such that $\|S\|_p \le C_{p, q, n}\|S\|_q$ for any Rademacher sum $S = \sum_{i=1}^na_i\e_i$ with fixed $n$. It is known due to Komorowski \cite{K} (up to our best knowledge) that $C_{p, 2, n} = \frac{\|S_n\|_p}{\|S_n\|_2}$ for $p \ge 3$, where $S_n = \frac{1}{\sqrt{n}}\sum_{i=1}^n\e_i$ corresponds to the diagonal vector of coefficients.

    Another natural question is about stability versions of the optimal Khintchine inequalities, that is inequalities strengthened by some positive deficit depending onf some notion of distance from the extremizer. This question was recently studied by De, Diakonikolas, Servedio \cite{DDS} (the authors called it ``robust of Khitchine inequality"), Eskenazis, Nayar, Tkocz \cite{ENT2} and Melbourne, Roberto \cite{MR} in the case of $C_{2, 1}$. They derived inequalities of the form
    \begin{align*}
        \E|S| \ge \frac{1}{\sqrt{2}}\E|\e_1 + \e_2| - c\left|a - a^{(2)}\right|,
    \end{align*}
    where $S = \sum_{i=1}^na_i\e_i$, $\sum_{i=1}^na_i^2 = 1$, $a = (a_1, \ldots, a_n)$, $a^{(2)} = \left(\frac{1}{\sqrt{2}}, \frac{1}{\sqrt{2}}, 0, \ldots, 0\right)$ and $c$ is a universal constant. Eskenazis, Nayar and Tkocz in \cite{ENT2} used this stability result in the context of extremal projections of convex bodies. They also investigated in \cite{ENT1} a different kind of stability, i.e. inequality of the form
    \begin{align*}
        \E\left|\sum_{i=1}^na_iX_i\right| \ge \E\left|\frac{X_1 + X_2}{\sqrt{2}}\right|
    \end{align*}
    for $X_1, \ldots, X_n$ i.i.d. symmetric close to Rademacher distribution and $\sum_{i=1}^na_i^2 = 1$, $a_i^2 \le \frac 1 2$ for all $i$, which they called ``distributional stability".

    In the present paper we shall investigate the stability of the optimal constants $C_{p, 2}$ and $C_{p, 2, n}$ for $p \ge 3$. Our main idea is to obtain deficit by improving some of the known proofs of Khintchine inequalities with these constants. This case is closely related with the Schur order, thus now we provide a short reminder of this theory.

    \subsection{Schur order}

    For a vector $x = (x_1, \ldots, x_n)$ with non-negative coefficients summing up to 1 let $x_1^*, \ldots, x_n^*$ be a non-increasing rearrangement of the coordinates $x_1, \ldots, x_n$. Given two vectors $x$, $y$ as above we say that $x$ is lesser than $y$ in the Schur order or that $x$ is majorized by $y$ iff $\forall_{1 \le k \le n}\sum_{i=1}^kx_i^* \le \sum_{i=1}^ky_i^*$ to be denoted $x \prec y$. In general the assumptions of non-negativity of coordinates and their sum equal 1 are not necessary, but it will always be the case in this paper. We define a $T$-transformation as a linear function of the form
    \begin{align*}
        T_{j, k}(x) = (x_1, \ldots, x_{j-1}, (1-\lambda)x_j + \lambda x_k, x_{j+1}, \ldots, x_{k-1}, \lambda x_j + (1-\lambda)x_k, x_{k+1}, \ldots, x_n),
    \end{align*}
    where $\lambda \in [0, 1]$. In other words, it is any linear function which takes two coordinates and brings them closer to each other preserving their sum. We also say that a function $F : \Delta_{n-1} \to \R$, where $\Delta_{n-1} = \left\{(x_1, \ldots, x_n) : \sum_{i=1}^nx_i = 1, \forall_ix_i \ge 0\right\}$ is a simplex, is Schur convex iff $x \prec y$ implies $F(x) \le F(y)$. 

    The connection between Schur order and Khintchine inequality is given by the following proposition, proved by Eaton \cite{E} with the main corollary due to Komorowski \cite{K}.

    \begin{prop}\label{schurmon}
        Let $\e_1, \ldots, \e_n$ be i.i.d. Rademacher random variables, $a_1, \ldots, a_n, b_1, \ldots, b_n$ be real numbers satisfying $\sum_{i=1}^na_i^2 = \sum_{i=1}^nb_i^2 = 1$ and $(a_1^2, \ldots, a_n^2) \prec (b_1^2, \ldots, b_n^2)$, $\Phi : \R \to \R$ be even and such that $\Phi''$ is convex. Then
        \begin{align*}
            \E\Phi\left(\sum_{i=1}^nb_i\e_i\right) \le \E\Phi\left(\sum_{i=1}^na_i\e_i\right).
        \end{align*}
        In particular for $p \ge 3$ we get
        \begin{align}\label{pschurmon}
            \E\left|\sum_{i=1}^nb_i\e_i\right|^p \le \E\left|\sum_{i=1}^na_i\e_i\right|^p,
        \end{align}
        equivalently the function $x \mapsto -\E\left|\sum_{i=1}^n\sqrt{x_i}\e_i\right|^p$ is Schur convex.
    \end{prop}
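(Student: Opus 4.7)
The plan is a two-step reduction: first from $n$ coordinates to two, then from a two-coordinate Schur-type inequality to a one-variable convexity. By the Hardy--Littlewood--P\'olya theorem, $(a_i^2) \prec (b_i^2)$ means $(a_i^2)$ arises from $(b_i^2)$ by finitely many $T$-transformations, so by induction I may assume $a_i = b_i$ for all $i \ge 3$ and $a_1^2 + a_2^2 = b_1^2 + b_2^2$, with $(a_1^2, a_2^2)$ more equal than $(b_1^2, b_2^2)$.

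Next I would condition on the tail. Setting $Y := \sum_{i=3}^n a_i\e_i$ and $\Psi(x) := \E\Phi(x + Y)$, the function $\Psi$ inherits both hypotheses: it is even (since $\Phi$ is even and $Y \stackrel{d}{=} -Y$) and $\Psi''(x) = \E\Phi''(x+Y)$ is convex in $x$ (as an average of translates of the convex function $\Phi''$). Averaging over $\e_1, \e_2 \in \{\pm 1\}$ and using evenness of $\Psi$,
\begin{align*}
    \E\Phi\!\left(\sum_{i=1}^n a_i\e_i\right) = \tfrac{1}{2}\bigl[\Psi(\sqrt{u}+\sqrt{v}) + \Psi(\sqrt{u}-\sqrt{v})\bigr], \qquad u := a_1^2,\; v := a_2^2,
\end{align*}
and similarly for $b$. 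Setting $A := \sqrt{u}+\sqrt{v}$, $B := \sqrt{u}-\sqrt{v}$, one has $A^2 + B^2 = 2(u+v)$ and $A^2 - B^2 = 4\sqrt{uv}$; since $\sqrt{uv}$ grows as $(u,v)$ moves toward equality, the Schur ordering on $(u,v)$ corresponds to the \emph{opposite} Schur ordering on $(A^2, B^2)$. Hence the inequality reduces to Schur-convexity of $H(A^2) + H(B^2)$ in $(A^2, B^2)$ with $H(x) := \Psi(\sqrt{x})$, i.e.\ to convexity of $H$ on $[0,\infty)$.

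For the main calculation, I would differentiate to get $4x^{3/2} H''(x) = \sqrt{x}\,\Psi''(\sqrt{x}) - \Psi'(\sqrt{x})$, so I need $y\Psi''(y) \ge \Psi'(y)$ for $y \ge 0$. Evenness gives $\Psi'(0) = 0$, whence $\Psi'(y) = \int_0^y \Psi''(s)\,ds$; moreover $\Psi''$ is even and convex, so $\Psi''(0) = \min \Psi''$ and the chord-above-graph bound yields $\int_0^y \Psi''(s)\,ds \le \tfrac{y}{2}(\Psi''(0)+\Psi''(y)) \le y\Psi''(y)$, as required. Inequality \eqref{pschurmon} then follows by taking $\Phi(x) = |x|^p$, whose second derivative $p(p-1)|x|^{p-2}$ is convex exactly when $p \ge 3$; the reformulation as Schur-convexity of $x \mapsto -\E|\sum_i \sqrt{x_i}\,\e_i|^p$ is just the substitution $x_i = a_i^2$.

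The main obstacle I anticipate is spotting the Schur-order reversal: once one sees that averaging over $\e_1, \e_2$ sends the majorization on $(u, v)$ to the opposite majorization on $(A^2, B^2)$, the problem collapses to a one-variable convexity statement for $H(x) = \Psi(\sqrt{x})$, which in turn yields to a clean chord estimate using only the two assumptions on $\Phi$. Everything else is either bookkeeping (the $T$-transformation reduction, the conditioning) or routine (the differentiation of $H$).
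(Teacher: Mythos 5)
Your proposal is correct. One thing to be aware of: the paper itself does not prove Proposition \ref{schurmon} --- it is quoted from Eaton \cite{E} and Komorowski \cite{K} --- so there is no in-paper argument to compare against; your write-up is a valid self-contained proof. The structure you use (reduce to a single $T$-transformation via Hardy--Littlewood--P\'olya, condition on the untouched coordinates to form $\Psi(x)=\E\Phi(x+Y)$, and then prove a one-variable convexity statement in the ``squared'' variable) is exactly the classical Eaton-type route, and it is also the mechanism the paper itself exploits later: the function $\psi_s(t)=\left|s+\sqrt{t}\right|^p+\left|s-\sqrt{t}\right|^p$ of Section 2 and the parametrization by $\mu(1-\mu)$ in Lemma \ref{lemdiag} are the same reduction in slightly different coordinates. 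Your variant diagonalizes via $A=\sqrt{u}+\sqrt{v}$, $B=\sqrt{u}-\sqrt{v}$ and proves convexity of $H(x)=\Psi(\sqrt{x})$, whereas the standard presentation fixes the sum $u+v$ and proves convexity of $t\mapsto\frac12\bigl(\Psi(s+\sqrt{t})+\Psi(s-\sqrt{t})\bigr)$; these are equivalent, and your key estimate $y\Psi''(y)\ge\Psi'(y)$ (from $\Psi'(0)=0$, the chord bound for the convex $\Psi''$, and $\Psi''(0)=\min\Psi''$ by evenness) is clean and correct. Two minor points worth one line each in a polished version: differentiation under the expectation is immediate because $Y$ takes finitely many values, and convexity of $H$ at the endpoint $x=0$ follows from convexity on $(0,\infty)$ plus continuity; also note that the reformulation at the end should read that $|x|^{p-2}$ is convex for $p\ge3$ (the ``exactly when'' is not needed and is slightly off at $p=2$).
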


    Since easily $\left(\frac 1 n, \ldots, \frac 1 n\right) \prec x \prec (1, 0, \ldots, 0)$ for any $x \in \Delta_{n-1}$, the above proposition proves that under $\E|S|^2 = 1$ the $p$-th moment $\E|S|^p$ of a Rademacher sum $S$ is maximized for $S = \frac{1}{\sqrt{n}}\sum_{i=1}^n\e_i$ for any $p \ge 3$.

    \subsection{Main results}

    In this paper we shall investigate two kinds of stability. Let $p \ge 3$, $S = \sum_{i=1}^na_i\e_i$ be a Rademacher sum and $G$ be a standard Gaussian variable. For simplicity we shall assume that $\E|S|^2 = \sum_{i=1}^na_i^2 = 1$. We know that the constant $C_{p, 2} = \|G\|_p$ is asymptotically achieved by vectors $a = (a_1, \ldots, a_n)$ satisfying anti-concentration assumptions of CLT, particularly by $a_i = \frac{1}{\sqrt{n}}$ for all $i$. We prove a strengthening of the inequality $\E|S|^p \le \E|G|^p$ by some deficit depending on a vector $a$ which is always positive. We call it ``Gaussian stability".

    \begin{thm}\label{gauss}
        Let $p \ge 3$, $\e_1, \ldots, \e_n$ be i.i.d. Rademacher random variables, $G$ be a standard Gaussian variable, $a_1 \ge \ldots \ge a_n \ge 0$ be such that $\sum_{i=1}^na_i^2 = 1$, $S = \sum_{i=1}^na_i\e_i$ be a Rademacher sum. Then there exists a constant $C_p > 0$, depending only on $p$, such that
        \begin{align*}
            \E|S|^p \le \E|G|^p - C_p\sum_{i=1}^na_i^4.
        \end{align*}
    \end{thm}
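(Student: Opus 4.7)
My strategy is a Lindeberg-type swap: introduce i.i.d.\ standard Gaussians $g_1,\ldots,g_n$ independent of the $\e_i$, and for $k = 0, 1, \ldots, n$ set $Z_k := \sum_{i \le k} a_i g_i + \sum_{i > k} a_i \e_i$, so that $Z_0 = S$ and $Z_n \stackrel{d}{=} G$. Telescoping gives
\begin{align*}
    \E|G|^p - \E|S|^p = \sum_{k=1}^n \bigl(\E|Z_k|^p - \E|Z_{k-1}|^p\bigr),
\end{align*}
and the task reduces to showing that the $k$-th swap contributes at least $C_p a_k^4$.

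Fix $k$ and set $W_k := Z_k - a_k g_k = Z_{k-1} - a_k \e_k$, which is independent of $(g_k, \e_k)$. Exploiting $a_k g_k \stackrel{d}{=} a_k |g_k| \e_k'$ with an independent Rademacher $\e_k'$ and the symmetry of $\e_k, \e_k'$, one checks
\begin{align*}
    \E\bigl[|Z_k|^p - |Z_{k-1}|^p \bigm| W_k\bigr] = \E_{g_k}\bigl[G_{W_k}(a_k^2 g_k^2)\bigr] - G_{W_k}(a_k^2), \qquad G_w(s) := \frac{|w + \sqrt s|^p + |w - \sqrt s|^p}{2}.
\end{align*}
The essential input—an elementary case of Proposition~\ref{schurmon}—is that $s \mapsto G_w(s)$ is convex on $[0, \infty)$ for all $w \in \R$ and $p \ge 3$. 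Writing $a_k^2 g_k^2 = a_k^2 + a_k^2(g_k^2 - 1)$ and Taylor-expanding $G_{W_k}$ to second order around $a_k^2$, using $\E g_k^2 = 1$ and $\E(g_k^2 - 1)^2 = 2$, produces
\begin{align*}
    \E_{g_k}\bigl[G_{W_k}(a_k^2 g_k^2)\bigr] - G_{W_k}(a_k^2) \ge a_k^4 \cdot m_{W_k},
\end{align*}
where $m_{W_k}$ is a local (random) lower bound on $G_{W_k}''$; a direct computation gives $G_w''(0) = \frac{p(p-1)(p-2)(p-3)}{12}|w|^{p-4}$ for $p \ne 3$ (and an analogue involving the density of $w$ at zero for $p = 3$). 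Summing the telescoped increments then yields the desired inequality once $\E_{W_k} m_{W_k}$ is shown to be bounded below by a $p$-dependent positive constant uniformly in $k$.

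\textbf{Main obstacle.} The heart of the proof is establishing a uniform positive lower bound on $\E_{W_k} m_{W_k}$. For $p > 4$ this amounts to a uniform lower bound on the positive moment $\E|W_k|^{p-4}$, which is benign once $W_k$ is nondegenerate. For $p \in [3, 4]$, $|W_k|^{p-4}$ is a negative (or critical) power, and one must combine an anti-concentration estimate on $W_k$ (a local-CLT-type bound on its density near zero) with a careful integral estimate of the Jensen deficit. The case $p = 3$ is especially delicate: the fourth derivative of $|x|^3$ is a Dirac delta, so the relevant quantity literally becomes the density of $W_k$ at the origin. One must also handle the case $k = 1$, where $W_1 = \sum_{i > 1} a_i \e_i$ is a pure Rademacher sum potentially carrying an atom at zero; a workaround is to first invoke Proposition~\ref{schurmon} to bound $\E|S|^p$ from above by the $p$-th moment of a well-spread reference vector (e.g.\ the uniform weight vector on $\lfloor 1/\max_i a_i^2\rfloor$ coordinates, which satisfies $\sum_i a_i^4 \le \max_i a_i^2$), ensuring that the pre-processed leave-one-out sums are nondegenerate.
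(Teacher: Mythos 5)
Your skeleton coincides with the paper's: the Lindeberg swap with $Z_k$, the reduction to $G_w(s)=\tfrac12\psi_w(s)$, a Jensen/Taylor argument around $s=a_k^2$, and (for $3\le p\le 4$) anti-concentration of the leave-one-out sum is exactly the route taken via Lemma \ref{lemgauss}, Lemmas \ref{p4bound}--\ref{p3bound}, formula \eqref{formula2}, Proposition \ref{chernoff}, Proposition \ref{BE} and Corollary \ref{Gaussmod}. But what you set aside as the ``main obstacle'' is the entire content of the proof, and the sketch around it has real problems. For $3\le p<4$ the second derivative $G_w''(s)$ tends to $0$ as $s\to\infty$ (already $\psi_0''(t)=\tfrac{p(p-2)}2 t^{(p-4)/2}$), so a single ``local'' lower bound $m_{W_k}$ does not survive integration against the unbounded variable $a_k^2g_k^2$; one must keep the integral remainder, restrict to a range of $g_k^2$ (or weight by the Gaussian tail as in \eqref{taylor1}--\eqref{g1}), and only then insert a bound on $\E\psi_{W_k}''$. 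Moreover, for $p=3$ the quantity you would need is not ``the density of $W_k$ at the origin'': that density need not exist, and no lower bound for it at scales finer than the largest coefficient is possible (Remark \ref{BEopt}). What is both needed and available is mass $\gtrsim a_k$ in a window of width $a_k$, i.e.\ $\Prb(|W_k|\le a_k)\ge\tfrac3{16}a_k$ from Proposition \ref{BE}; this is legitimate precisely because the swap is performed in decreasing order of the $a_i$, so that $a_k$ dominates all Rademacher coefficients left in $W_k$, and the Gaussian part is absorbed by Corollary \ref{Gaussmod}. In particular an atom of $W_1$ at $0$ is harmless for $p\in[3,4]$ (it only increases $(a_1^2t-W_1^2)_+$ and $(|W_1|+a_1\sqrt t)^{p-4}$), so the $k=1$ issue you flag is not where the difficulty lies; the genuine degenerate case is $a_1$ close to $1$, where $\E|W_1|^{p-4}$ (needed for $p>4$) may be tiny.

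Your proposed workaround for that case is based on a reversed Schur comparison and would fail. The uniform weight vector on $m=\lfloor 1/\max_i a_i^2\rfloor$ coordinates has squared entries $1/m\ge a_1^2$, hence $(a_1^2,\ldots,a_n^2)\prec(1/m,\ldots,1/m,0,\ldots,0)$, and Proposition \ref{schurmon} then yields that the uniform sum has a \emph{smaller} $p$-th moment than $S$ --- a lower bound on $\E|S|^p$, not the upper bound you need. In fact a uniform vector on $m$ coordinates is majorized by $(a_1^2,\ldots,a_n^2)$ only when $\sum_{i\le m}a_i^2=1$, so for a generic vector the only admissible uniform comparison vector is the full diagonal $(1/n,\ldots,1/n)$, and passing to it destroys the deficit, since $1/n$ can be far smaller than $\sum_i a_i^4$ (e.g.\ $a=(1/\sqrt2,1/\sqrt2,0,\ldots,0)$ with $n$ large). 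The paper's resolution is different: it assumes $a\le 1/\sqrt2$ in the swap lemma so that $\E W^2=1-a^2\ge\tfrac12$, and when $a_1>1/\sqrt2$ it spreads the vector by $T$-transformations only until $b_1^2=\tfrac12$, which keeps the Schur comparison in the correct direction ($\E|S|^p\le\E|\sum_i b_i\e_i|^p$) while preserving the deficit through $\sum_i b_i^4\ge\tfrac14\ge\tfrac14\sum_i a_i^4$. As it stands, your proposal is a plan whose central estimates are missing and whose one concrete new ingredient does not work.
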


    \begin{xrem}
        It is clear that the deficit $C_p\sum_{i=1}^na_i^4$ is increasing with respect to the Schur order, due to Schur convexity of $x \mapsto x_1^2 + \ldots + x_n^2$. Also the result of Theorem \ref{gauss} is optimal in the following sense: $q = 4$ is the minimal value of $q$ for which there exists a constant $C(p, q) > 0$ such that the inequality
        \begin{align*}
            \E|S|^p \le \E|G|^p - C(p, q)\sum_{i=1}^na_i^q
        \end{align*}
        holds for every $n, a_1, \ldots, a_n$. We shall prove it in Section 5.
    \end{xrem}

    The second kind of stability, which we call ``diagonal stability", is the stability of the constant $C_{p, 2, n} = \|S_n\|_p$ for fixed $n$, where $S_n = \frac{1}{\sqrt{n}}\sum_{i=1}^n\e_i$. Here we separate the cases $p > 3$ and $p = 3$.

    \begin{thm}\label{diag}
        Let $p > 3$, $\e_1, \ldots, \e_n$ be i.i.d. Rademacher variables, numbers $a_1 \ge \ldots \ge a_n \ge 0$ be such that $\sum_{i=1}^na_i^2 = 1$, $S = \sum_{i=1}^na_i\e_i$, $S_n = \frac{1}{\sqrt{n}}\sum_{i=1}^n\e_i$. Then there exists a constant $C_p > 0$, depending only on $p$, such that
        \begin{align*}
            \E|S|^p \le \E|S_n|^p - C_p\sum_{i=1}^n\left(a_i^2 - \frac 1 n\right)^2.
        \end{align*}
    \end{thm}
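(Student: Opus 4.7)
The plan is to strengthen Proposition \ref{schurmon} quantitatively via a T-transformation step, and then telescope along a sequence of T-transformations from $(a_1^2, \ldots, a_n^2)$ to the uniform vector $u = (1/n, \ldots, 1/n)$. The key lemma I would aim for reads: for any $x \in \Delta_{n-1}$, any indices $j \ne k$, and any $\lambda \in [0, 1/2]$, if $T_\lambda x$ denotes the T-transformation replacing $(x_j, x_k)$ by $((1-\lambda)x_j + \lambda x_k,\, \lambda x_j + (1-\lambda)x_k)$, then
\[
\E\Bigl|\sum_i \sqrt{(T_\lambda x)_i}\, \e_i\Bigr|^p - \E\Bigl|\sum_i \sqrt{x_i}\, \e_i\Bigr|^p \;\ge\; C_p\, \lambda(1-\lambda)(x_j - x_k)^2.
\]
Granting this, since $u \prec (a_1^2, \ldots, a_n^2)$ a finite sequence of T-transformations transports the latter to the former, and the total decrease in $\sum_i x_i^2$ telescopes to $\sum_{i=1}^n a_i^4 - 1/n = \sum_{i=1}^n (a_i^2 - 1/n)^2$, which yields the theorem.

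To prove the lemma I would condition on $\e_i$ for $i \notin \{j, k\}$, setting $c = \sum_{i \notin \{j, k\}} \sqrt{x_i}\, \e_i$. Writing $s = x_j + x_k$ and introducing $\rho = \sqrt{s^2 - (x_j - x_k)^2}$, the conditional moment factors as $\frac{1}{4}\bigl[\chi_c(s + \rho) + \chi_c(s - \rho)\bigr]$, where
\[
\chi_c(y) = |\sqrt y + c|^p + |\sqrt y - c|^p.
\]
In this form, the Schur monotonicity of Proposition \ref{schurmon} is exactly convexity of $\chi_c$ on $[0, 2s]$, and a quantitative version follows from a lower bound on $\chi_c''$: indeed, a double integration yields a gain at least $\kappa(c)\,\lambda(1-\lambda)(x_j - x_k)^2$ with $\kappa(c) = \inf_{v \in [0, 2s]} \chi_c''(v)$. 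Direct differentiation produces
\[
4y^{3/2}\chi_c''(y) = p\bigl[|\sqrt y + c|^{p-2}((p-2)\sqrt y - c) + |\sqrt y - c|^{p-2}((p-2)\sqrt y + c)\bigr],
\]
and a Taylor expansion shows that near $\sqrt y = 0$ this equals $\tfrac23 p(p-1)(p-2)(p-3)|c|^{p-4}(\sqrt y)^3 + O((\sqrt y)^5)$, so that $\chi_c''(0^+) = \tfrac{p(p-1)(p-2)(p-3)}{6}|c|^{p-4}$, strictly positive for $p > 3$ and $c \ne 0$.

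The main obstacle will be extracting a constant $C_p$ that is uniform in $n$ and in the Rademacher-sum structure of $c$: the pointwise bound $\kappa(c)$ degenerates near the corner $\sqrt v = |c|$ of $\chi_c$ and, for $p > 4$, as $|c| \to 0$ (whereas for $3 < p < 4$ the limiting derivative $\chi_0''(y) = \tfrac{p(p-2)}{2}y^{(p-4)/2}$ blows up near $y = 0$). My approach would split the analysis by the magnitude of $|c|$ compared to $\sqrt s$: in the regime $|c|$ not too small the leading Taylor coefficient provides the bound, while in the opposite regime the asymptotic $\chi_c''(y) \sim \tfrac{p(p-2)}{2}\, y^{(p-4)/2}$ for $\sqrt y \gg |c|$ applies. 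Taking the expectation over $c$ and exploiting that $\E c^2 = 1 - s$ forces $c$ to have non-negligible spread, one should obtain $\E_c\, \kappa(c) \ge \kappa_p > 0$ uniformly in $n, s$ and in the remaining coefficient structure, completing the lemma. The hypothesis $p > 3$ enters essentially here through the factor $(p-3)$: at $p = 3$ the expression above shows $\chi_c'' \equiv 0$ on $[0, c^2]$, destroying the strict convexity and explaining why the paper must treat $p = 3$ separately.
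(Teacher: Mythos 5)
Your skeleton is essentially the paper's: a chain of $T$-transformations from $(a_1^2,\ldots,a_n^2)$ to the uniform vector, a per-step gain proportional to the per-step decrease of $\sum_i x_i^2$ (which telescopes to $\sum_i\left(a_i^2-\frac1n\right)^2$), and, for the per-step bound, conditioning on the remaining signs and using quantitative convexity of $\chi_c$ (which is exactly the paper's $\psi_c$) together with concentration of $c$; this is Lemma \ref{lemdiag} and its use in the paper. Your identity for $4y^{3/2}\chi_c''(y)$, the limit $\chi_c''(0^+)=\frac{p(p-1)(p-2)(p-3)}{6}|c|^{p-4}$, and the double-integration bound ``gain $\ge\kappa(c)\,\lambda(1-\lambda)(x_j-x_k)^2$'' are all correct.

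The gap is in the final uniformity step. With $\kappa(c)=\inf_{v\in[0,2s]}\chi_c''(v)$, the claim that $\E_c\,\kappa(c)\ge\kappa_p>0$ uniformly in $n$ and in $s=x_j+x_k$ is false for $p>4$: by your own expansion $\kappa(c)\le\chi_c''(0^+)=c_p|c|^{p-4}$, and when $s\to1$ one has $\E c^2=1-s\to0$ and hence $\E|c|^{p-4}\to0$; in the extreme case $n=2$ one has $c\equiv0$ and $\kappa(0)=\inf_{v\in[0,2]}\frac{p(p-2)}{2}v^{(p-4)/2}=0$. So the ``non-negligible spread'' argument gives nothing precisely in the large-$s$ regime, and the global infimum over $[0,2s]$ is the wrong quantity there, even though the per-step inequality itself remains true. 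Two repairs: (i) as the paper does, prove the per-step bound only when $s$ is bounded away from $1$ (there $\E|c|^{p-4}$ is bounded below; note that for $4<p<6$ this is not Jensen but requires a Khintchine-type lower moment comparison, as in the paper's Lemma \ref{lemgauss}), handle a top coefficient with $a_1^2>0.9-\frac1n$ by Schur monotonicity (Proposition \ref{schurmon}) at the cost of a constant factor, and settle $n=2$ by a direct Taylor expansion plus monotonicity; or (ii) avoid the infimum altogether: the $v$-integration range in your double integral contains $[s,s+\rho]$, on which $\chi_c''(v)\ge\psi_0''(v)\ge\frac{p(p-2)}{2}s^{(p-4)/2}$ for $p\ge4$ (Corollary \ref{ezbound}), which yields a uniform constant whenever $s\ge\frac12$, while for $s\le\frac12$ your $|c|^{p-4}$ bound combined with the moment comparison applies. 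The range $3<p<4$ is fine as you set it up, via Lemma \ref{p3bound} and the Chernoff bound (Proposition \ref{chernoff}), since $\left(|c|+\sqrt v\right)^{p-4}$ is bounded below on the event $|c|\le2$ uniformly in $s$.
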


    \begin{xrem}
        Similarly to the previous remark, the deficit $C_p\sum_{i=1}^n\left(a_i^2 - \frac 1 n\right)^2$ is increasing with respect to the Schur order.
    \end{xrem}

    In the case $p = 3$ we get somehow worse result. The reason why it is worse lies deeply in the proof, but might be related with the criticality of $p = 3$, as for $p < 3$ it is unclear whether $S_n$ maximizes $\E|S|^p$. In the following results (and throughout the whole paper) we use the notation $x_+ = \max\{x, 0\}$ for $x \in \R$.

    \begin{thm}\label{crit}
        Let $\e_1, \ldots, \e_n$ be i.i.d. Rademacher variables, $a_1 \ge \ldots \ge a_n \ge 0$ be such that $\sum_{i=1}^na_i^2 = 1$, $S = \sum_{i=1}^na_i\e_i$, $S_n = \frac{1}{\sqrt{n}}\sum_{i=1}^n\e_i$. Then there exists a constant $C_3 > 0$ such that
        \begin{align*}
            \E|S|^3 \le \E|S_n|^3 - \frac{C_3}{a_1^2 + \frac 1 n}\sum_{i=1}^n\sqrt{\frac 1 n \left(a_i^2 + a_n^2 - \frac 1 n\right)}\left(a_i^2 - \frac 1 n\right)_+\left(\frac 1 n - a_{n+1-i}^2\right)_+.
        \end{align*}
    \end{thm}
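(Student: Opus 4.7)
Since $(\tfrac{1}{n}, \ldots, \tfrac{1}{n}) \prec (a_1^2, \ldots, a_n^2)$, Proposition \ref{schurmon} already yields the qualitative inequality $\E|S|^3 \le \E|S_n|^3$, so my plan is to produce a quantitative version by building an explicit chain of $T$-transformations from $(a_1^2, \ldots, a_n^2)$ to $(\tfrac 1 n, \ldots, \tfrac 1 n)$ and lower-bounding the loss of $\E|S|^3$ at each step. The form of the deficit --- a sum over pairs $(a_i^2, a_{n+1-i}^2)$ --- suggests a greedy pairing procedure: at each stage, pick the largest current coordinate with square $>\tfrac 1 n$ and the smallest with square $<\tfrac 1 n$, and apply a $T$-transformation that preserves their sum while bringing one of them exactly to $\tfrac 1 n$. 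The geometric-mean factor $\sqrt{\tfrac 1 n (a_i^2 + a_n^2 - \tfrac 1 n)}$ appearing in the statement is precisely $\sqrt{x'y'}$ where $x' = a_i^2 + a_n^2 - \tfrac 1 n$ and $y' = \tfrac 1 n$ are the post-transformation coordinates at the moment when the pair originally at positions $i$ and $n+1-i$ gets resolved against the global minimum $a_n$.

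For a single $T$-transformation on indices $j, k$, I would condition on $R = \sum_{l \ne j, k} a_l \e_l$ and use the representation
\begin{align*}
4\,\E\bigl[|a_j \e_j + a_k \e_k + R|^3 \bigm| R\bigr] = \chi(a_j + a_k;\, R) + \chi(|a_j - a_k|;\, R), \qquad \chi(c; R) := |c + R|^3 + |c - R|^3.
\end{align*}
The structural identity
\begin{align*}
\chi''(c; R) = 6\bigl(|c + R| + |c - R|\bigr) = 12 \max(|c|,\, |R|)
\end{align*}
shows that $\chi$ is piecewise cubic in $c$, so the per-step increase of $\E|S|^3$ can be computed exactly after parameterizing the post-transformation pair $(b_j + b_k,\, |b_j - b_k|)$ on the circle $u^2 + v^2 = 2(a_j^2 + a_k^2)$ and integrating $\max(|c|, |R|)$ over the swept arc. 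Averaging over $R$ via a Paley--Zygmund-type bound on $\E\max(c, |R|)$ produces a factor of order $c^2/(\mathrm{Var}\,R + c^2)$; since $\mathrm{Var}\,R \le 1$ and the largest relevant scale of $c^2$ is controlled by $a_1^2 + \tfrac 1 n$, one recovers the prefactor $1/(a_1^2 + \tfrac 1 n)$ in the stated deficit.

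The main obstacle is the telescoping. The greedy procedure may modify a single top coordinate many times before certain bottom coordinates are addressed, so the product structure $(a_i^2 - \tfrac 1 n)_+ (\tfrac 1 n - a_{n+1-i}^2)_+$ in the deficit does not appear from any single step and must be extracted via careful bookkeeping of how excess mass at position $i$ eventually migrates to position $n+1-i$. Verifying that the per-step bounds combine \emph{multiplicatively} --- producing the product of excess and deficit rather than two separate squared terms as in Theorem \ref{diag} --- is the technical heart of the argument. This is the manifestation of criticality at $p = 3$: for $p > 3$ the second derivative $p(p-1)|x|^{p-2}$ carries a strictly positive curvature margin whose contribution survives as an isolated quadratic at each step (giving the cleaner $\sum_i (a_i^2 - \tfrac 1 n)^2$ deficit of Theorem \ref{diag}), whereas for $p = 3$ the piecewise linearity of $\Phi'' = 6|\cdot|$ eliminates that margin and forces the per-step bounds to couple through the distribution of $R$, producing the weaker coupled deficit of Theorem \ref{crit}.
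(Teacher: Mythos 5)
Your overall skeleton (greedy $T$-transformations pairing the current largest with the current smallest coordinate, conditioning on the rest $R$, and lower-bounding the gain per step through the function $\chi(c;R)=|c+R|^3+|c-R|^3$, which is exactly the paper's $\psi_R$) coincides with the paper's strategy, and you even guessed correctly where the factor $\sqrt{\tfrac1n\left(a_i^2+a_n^2-\tfrac1n\right)}$ comes from. But the crucial quantitative ingredient --- the single-step lower bound (the paper's Lemma \ref{lemcrit}) --- is not established by your sketch, and the tool you propose for it is the wrong one. Since for $p=3$ one has $\psi_R''(t)=\tfrac{3(t-R^2)_+}{2t^{3/2}}$ (formula \eqref{formula2}), the per-step gain vanishes on the event where $|R|$ exceeds the scale $a=\sqrt{a_j^2+a_k^2}$ of the pair being transformed; a nonzero bound therefore requires a \emph{small-ball} estimate $\Prb(|R|\le a)\gtrsim a$, which is Proposition \ref{BE} (Dzindzalieta--G\"otze), valid only because $a$ dominates every coefficient of $R$ --- this is precisely why the lemma must be applied to the pair (current largest, current smallest), and Remark \ref{BEopt} shows the scale restriction cannot be dropped. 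A ``Paley--Zygmund-type bound on $\E\max(c,|R|)$'' gives lower bounds on upper tails, not on $\Prb(|R|\le a)$, and the factor $c^2/(\mathrm{Var}\,R+c^2)$ you extract from it does not reproduce the stated prefactor (for small $c$ it is of order $c^2$, not $1/(a_1^2+\tfrac1n)$). You are also missing the second ingredient of the lemma: because the anticoncentration only yields $\E\psi_R''(a^2u)\gtrsim (u-1)_+$, the double integration produces $(\mu(1-\mu))^{3/2}-(\mu_0(1-\mu_0))^{3/2}$, and one needs the elementary concavity estimate \eqref{concave} to convert this into the product $\left(a_j^2-\tfrac1n\right)\left(\tfrac1n-a_k^2\right)$ times $\sqrt{\mu(1-\mu)}$.

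Conversely, the step you single out as the ``technical heart'' --- the telescoping/bookkeeping --- is in fact the easy part, and your description of it is off. No tracking of how excess mass migrates is needed: each single $T$-transformation already produces a deficit of product form (this is equally true for $p>3$, see Lemma \ref{lemdiag}; the clean $\sum_i(a_i^2-\tfrac1n)^2$ of Theorem \ref{diag} comes from the product via the identity \eqref{id1} and telescoping of $\sum_i a_i^4$, not from ``isolated quadratics per step''). One only observes that before the $j$-th step at most $j-1$ top and $j-1$ bottom coordinates have been touched, so the current largest squared is $\ge a_j^2$, the current smallest squared is $\le a_{n+1-j}^2$, while $a^2\le a_1^2+\tfrac1n$ and $a^2-\tfrac1n\ge a_j^2+a_n^2-\tfrac1n$; summing the per-step bounds over the first $k=\max\{i:a_i^2>\tfrac1n>a_{n+1-i}^2\}$ steps gives the theorem. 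So the gap in your proposal is concentrated entirely in the per-step estimate, where the anticoncentration input and the concavity argument are missing.
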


    The above deficit is clearly 0 only if $a_i = \frac{1}{\sqrt{n}}$ for all $i$. However, it is not comparable with the quantity $\sum_{i=1}^n\left(a_i^2 - \frac 1 n\right)^2$ and seems to be far from being optimal.\\

    In the Gaussian case the main idea of the proof is to write $G = \sum_{i=1}^na_iG_i$ for independent Gaussian variables $G_1, \ldots, G_n$ and bound the difference
    \begin{align*}
        \E\left|\sum_{i=1}^na_iG_i\right|^p - \E\left|\sum_{i=1}^na_i\e_i\right|^p
    \end{align*}
    from below by exchanging Rademachers for Gaussians one by one, keeping track of the deficit. In the diagonal case the main idea of the proof is quite similar, we proceed by nearing pairs of coefficients until all of them become equal and again keep track of the deficit at every step. In both cases we rely on improving the convexity arguments of the papers \cite{W,P,FHJSZ,K}. 
    
    In Section 2 we investigate the improved convexity together with providing concentration bounds we shall need later. In Section 3 we prove Theorem \ref{gauss}, whereas in Section 4 we prove Theorems \ref{diag} and \ref{crit}. In Section 5 we provide some remarks and open questions. 
    
    Throughout the whole paper $G$ is a standard Gaussian random variable, $\e_1, \ldots, \e_n$ are i.i.d. Rademacher variables, $S_n = \frac{1}{\sqrt{n}}\sum_{i=1}^n\e_i$, $a^{(n)} = \frac{1}{\sqrt{n}}(1, \ldots, 1)$ and $b^{(n)} = \frac 1 n (1, \ldots, 1)$.

    \section{Preliminary results}

    \subsection{The function $\psi_s$}

    Let $p \ge 3$. For $s \in \R$ we define a function $\psi_s : [0, \infty) \to \R$ by the formula 
    \begin{align*}
        \psi_s(t) = 2\E\left|s + \e\sqrt{t}\right|^p = \left|s + \sqrt{t}\right|^p + \left|s - \sqrt{t}\right|^p,
    \end{align*}
    where $\e$ is a Rademacher random variable. In the works of Whittle \cite{W}, Pinelis \cite{P} and Figiel, Hitczenko, Johnson, Schechtman, Zinn \cite{FHJSZ} the crucial property of $\psi_s$ is the convexity of $\psi_s''$. In order to prove Khintchine inequality with some positive deficit we shall need something stronger, i.e. some bounds from below on $\psi_s''$ or convexity of some perturbations of $\psi_s$. We begin with derivation of an integral formula for $\psi_s''$ for $p > 3$ together with nice explicit formula for $p = 3$.

    \begin{lem}\label{formula}
        Let $t > 0$. For $p > 3$ we have
        \begin{align}\label{formula1}
            \psi_s''(t) = \frac{p(p-1)(p-2)(p-3)}{4t^{\frac 3 2}}\int_0^{\sqrt{t}}\int_x^{\sqrt{t}}\int_{|s|-y}^{|s|+y}|z|^{p-4}dzdydx
        \end{align}
        and for $p = 3$ we have
        \begin{align}\label{formula2}
            \psi_s''(t) = \frac{3(t - s^2)_+}{2t^{\frac 3 2}}.
        \end{align}
    \end{lem}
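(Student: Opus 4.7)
The plan is to compute $\psi_s''(t)$ by direct iterated differentiation, converting, at each step, differences of the form $F(s+u)-F(s-u)$ into integrals via the fundamental theorem of calculus. By evenness of $|\cdot|^p$ we may assume $s\ge 0$, so that $|s|=s$ throughout. Setting $u=\sqrt{t}$ and differentiating $|s+u|^p+|s-u|^p$ in $u$, then dividing by $2u$, gives (using that $z|z|^{p-2}/(p-1)$ is an antiderivative of $|z|^{p-2}$)
\[
\psi_s'(t)=\frac{p}{2\sqrt{t}}\bigl[(s+\sqrt{t})|s+\sqrt{t}|^{p-2}-(s-\sqrt{t})|s-\sqrt{t}|^{p-2}\bigr]=\frac{p(p-1)}{2\sqrt{t}}\int_{s-\sqrt{t}}^{s+\sqrt{t}}|z|^{p-2}\,dz.
\]
Differentiating once more (quotient rule on the $1/\sqrt{t}$ prefactor combined with Leibniz on the integral, after passing through $u=\sqrt{t}$) and simplifying yields
\[
\psi_s''(t)=\frac{p(p-1)}{4t^{3/2}}\,g(\sqrt{t}),\qquad g(u):=u\bigl[|s+u|^{p-2}+|s-u|^{p-2}\bigr]-\int_{s-u}^{s+u}|z|^{p-2}\,dz.
\]

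For $p>3$ I would iterate the same trick. Clearly $g(0)=0$, and a short computation in which the two copies of $|s\pm u|^{p-2}$ coming from differentiating the product and the integral cancel gives
\[
g'(u)=u(p-2)\bigl[(s+u)|s+u|^{p-4}-(s-u)|s-u|^{p-4}\bigr]=u(p-2)(p-3)\int_{s-u}^{s+u}|z|^{p-4}\,dz,
\]
the last equality being the fundamental theorem applied to the antiderivative $\frac{1}{p-3}|z|^{p-3}\sign(z)$ of $|z|^{p-4}$. Integrating $g'$ from $0$ to $\sqrt{t}$, writing the factor $y$ in front of the inner integral as $\int_0^y dx$, and Fubini-swapping the order of the two outer integrals gives
\[
g(\sqrt{t})=(p-2)(p-3)\int_0^{\sqrt{t}}\int_x^{\sqrt{t}}\int_{s-y}^{s+y}|z|^{p-4}\,dz\,dy\,dx,
\]
which substituted into the formula for $\psi_s''(t)$ (and recalling $s=|s|$) yields \eqref{formula1}.

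For $p=3$ this route breaks down because the factor $p-3$ vanishes, so I would instead compute $g$ by hand. If $s\ge u$, then $|s\pm u|=s\pm u$, $|s+u|+|s-u|=2s$, and $\int_{s-u}^{s+u}|z|\,dz=2su$, whence $g(u)=0$; if $s<u$, then $|s-u|=u-s$, the sum in the bracket equals $2u$, the integral equals $u^2+s^2$, and $g(u)=u^2-s^2$. Combining, $g(\sqrt{t})=(t-s^2)_+$, which when plugged into $\psi_s''(t)=\tfrac{6}{4t^{3/2}}g(\sqrt{t})$ gives \eqref{formula2}. The only delicate point in the whole argument is the endpoint issue in the range $3<p<4$, where $|z|^{p-4}$ blows up at the origin; this is harmless because its antiderivative $\frac{1}{p-3}|z|^{p-3}\sign(z)$ is continuous on all of $\R$ (since $p-3>0$), so the fundamental theorem applies whenever $0\in[s-u,s+u]$. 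Every other step is routine calculus.
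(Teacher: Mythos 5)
Your proof is correct and follows essentially the same route as the paper: both reduce $\psi_s''(t)$ to the same auxiliary quantity $g(\sqrt t)=\sqrt t\bigl(|s+\sqrt t|^{p-2}+|s-\sqrt t|^{p-2}\bigr)-\int_{s-\sqrt t}^{s+\sqrt t}|z|^{p-2}\,dz$ and convert it into the triple integral via the fundamental theorem of calculus (you by differentiating $g$ in $u$ and integrating back with a Fubini step, the paper by rewriting the differences as nested integrals directly), with the $p=3$ case handled by the same explicit case computation. Your observation that the antiderivative $\frac{1}{p-3}|z|^{p-3}\sign(z)$ remains continuous at the origin for $3<p<4$ correctly addresses the only delicate point, which the paper notes in the same spirit.
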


    \begin{proof}
        Obviously $\psi_s = \psi_{-s}$, hence we may assume $s \ge 0$. By simple computation we have
        \begin{align*}
            \psi_s'(t) = \frac{p}{2\sqrt{t}}\left(\left|s + \sqrt{t}\right|^{p-1}\sign\left(s + \sqrt{t}\right) - \left|s - \sqrt{t}\right|^{p-1}\sign\left(s - \sqrt{t}\right)\right)
        \end{align*}
        and
        \begin{align*}
            \psi_s''(t) &= \frac{p(p-1)\left(\left|s + \sqrt{t}\right|^{p-2} + \left|s - \sqrt{t}\right|^{p-2}\right)}{4t} \\ &- \frac{p\left(\left|s + \sqrt{t}\right|^{p-2}\left(s + \sqrt{t}\right) - \left|s - \sqrt{t}\right|^{p-2}\left(s - \sqrt{t}\right)\right)}{4t^{\frac 3 2}}.
        \end{align*}
        Denote $\varphi_s(t) = \frac{4t^{\frac 3 2}}{p(p-1)}\psi_s''(t)$. By using $\frac{d}{dx}|x|^{\alpha} = \alpha|x|^{\alpha - 1}\sign(x)$ for $\alpha > 0$ and the fundamental theorem of calculus several times we get
        \begin{align*}
            \varphi_s(t) &= \sqrt{t}\left(\left|s + \sqrt{t}\right|^{p-2} + \left|s - \sqrt{t}\right|^{p-2}\right) - \int_{s - \sqrt{t}}^{s + \sqrt{t}}|x|^{p-2}dx \\ &= \int_0^{\sqrt{t}}\left(\left|s + \sqrt{t}\right|^{p-2} - |s + x|^{p-2} - |s - x|^{p-2} + \left|s - \sqrt{t}\right|^{p-2}\right)dx \\ &= (p-2)\int_0^{\sqrt{t}}\left(\int_{s+x}^{s + \sqrt{t}}|y|^{p-3}\sign(y)dy - \int_{s - \sqrt{t}}^{s-x}|y|^{p-3}\sign(y)dy\right)dx \\ &= (p-2)\int_0^{\sqrt{t}}\int_x^{\sqrt{t}}\left(|y - s|^{p-3}\sign(y - s) - |-y - s|^{p-3}\sign(-y - s)\right)dydx \\ &= (p-2)(p-3)\int_0^{\sqrt{t}}\int_x^{\sqrt{t}}\int_{-s-y}^{-s+y}|z|^{p-4}dzdydx.
        \end{align*}
        The last equality is valid for $p > 3$ and fails for $p = 3$ due to the integrability of $z \mapsto |z|^{p-4}$ around 0. All preceding steps are valid also for $p = 3$. Obviously changing $-s$ to $s$ does not change the last integral and thus we proved the formula \eqref{formula1}.

        For $p = 3$ we already have
        \begin{align*}
            \varphi_s(t) = \sqrt{t}\left(\left|s + \sqrt{t}\right| + \left|s - \sqrt{t}\right|\right) - \int_{s - \sqrt{t}}^{s + \sqrt{t}}|x|dx.
        \end{align*}
        For $s < \sqrt{t}$ it gives
        \begin{align*}
            \varphi_s(t) = 2t - \int_0^{\sqrt{t}+s}xdx - \int_0^{\sqrt{t}-s}xdx = t - s^2
        \end{align*}
        and for $s \ge \sqrt{t}$ it gives
        \begin{align*}
            \varphi_s(t) = 2s\sqrt{t} - \int_{s-\sqrt{t}}^{s+\sqrt{t}}xdx = 0,
        \end{align*}
        which proves the formula \eqref{formula2}.
    \end{proof}

    Now we shall use the formula \eqref{formula1} to derive some bounds from below on $\psi_s''$. Let us begin with the case $p \ge 4$. Then the function $z \mapsto |z|^{p-4}$ is non-decreasing on $[0, \infty)$ and hence clearly the formula \eqref{formula1} is non-decreasing in $|s|$. Thus the following corollary holds.

    \begin{cor}\label{ezbound}
        For $p \ge 4$ and $t > 0$ we have $\psi_s''(t) \ge \psi_0''(t) = \frac{p(p-2)}{2}t^{\frac{p-4}{2}}$. Equivalently the function $\psi_s(t) - \psi_0(t) = \psi_s(t) - 2t^{\frac p 2}$ is convex in $t$.
    \end{cor}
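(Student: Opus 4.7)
The plan is to read off the corollary directly from the integral representation \eqref{formula1}, so the proof is essentially a one-line observation after a very small monotonicity computation. First I would fix $t > 0$ and $p \ge 4$, and look at the triple integral
\[
I(s) = \int_0^{\sqrt{t}}\int_x^{\sqrt{t}}\int_{|s|-y}^{|s|+y}|z|^{p-4}\,dz\,dy\,dx,
\]
so that $\psi_s''(t) = \frac{p(p-1)(p-2)(p-3)}{4t^{3/2}} I(s)$. It suffices to show that $I(s) \ge I(0)$ for every $s \in \R$.

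Since $\psi_s = \psi_{-s}$, I may assume $s \ge 0$. The innermost integrand $z \mapsto |z|^{p-4}$ is even, nonnegative and nondecreasing on $[0,\infty)$ for $p \ge 4$, and the inner integral is taken over an interval of fixed length $2y$ centered at $s$. Writing $g(s) = \int_{s-y}^{s+y}|z|^{p-4}\,dz$, differentiation under the integral sign gives $g'(s) = |s+y|^{p-4} - |s-y|^{p-4}$, which is nonnegative for $s \ge 0$ because $|s+y| \ge |s-y|$ and $u \mapsto u^{p-4}$ is nondecreasing on $[0,\infty)$. Hence the inner integral is nondecreasing in $s \ge 0$, and integrating in $x$ and $y$ preserves this, so $I(s) \ge I(0)$. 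This already gives $\psi_s''(t) \ge \psi_0''(t)$.

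The value $\psi_0''(t) = \frac{p(p-2)}{2}t^{(p-4)/2}$ can then be computed either by plugging $s = 0$ into \eqref{formula1} and evaluating the resulting triple integral, or more simply by noting directly from the definition that $\psi_0(t) = 2t^{p/2}$ and differentiating twice. Finally, since $\psi_0(t) = 2t^{p/2}$ is $C^2$ on $(0,\infty)$, the inequality $\psi_s''(t) - \psi_0''(t) \ge 0$ for $t > 0$ is exactly the statement that $t \mapsto \psi_s(t) - 2t^{p/2}$ has nonnegative second derivative on $(0,\infty)$, i.e.\ is convex there. I do not foresee a serious obstacle: the only genuine content is the trivial monotonicity of $g(s)$, which is immediate from $p \ge 4$; everything else is bookkeeping.
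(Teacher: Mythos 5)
Your proposal is correct and follows essentially the same route as the paper, which justifies the corollary precisely by noting that for $p \ge 4$ the map $z \mapsto |z|^{p-4}$ is non-decreasing on $[0,\infty)$, so the right-hand side of \eqref{formula1} is non-decreasing in $|s|$ and hence minimized at $s=0$. Your differentiation of the inner integral $g(s)$ is just a spelled-out version of that same monotonicity observation, and the evaluation $\psi_0(t)=2t^{p/2}$ matches the paper's statement.
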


    It turns out that this simple bound is not optimal, for example using it we would only prove Theorem \ref{gauss} with $\sum_{i=1}^na_i^p$ instead of $\sum_{i=1}^na_i^4$. The next lemma provides another lower bound for $\psi_s''(t)$, which is independent of $t$ and hence much stronger for small values of $t$.

    \begin{lem}\label{p4bound}
        For $p > 4$ we have
        \begin{align*}
            \psi_s''(t) \ge \frac{3p(p-1)(p-2)(p-3)}{64}|s|^{p-4},
        \end{align*}
        equivalently the function $\psi_s(t) - \frac{3p(p-1)(p-2)(p-3)}{128}|s|^{p-4}t^2$ is convex in $t$.
    \end{lem}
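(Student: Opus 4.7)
The plan is to apply the integral representation for $\psi_s''(t)$ from Lemma \ref{formula} and lower bound the triple integral in \eqref{formula1} by a multiple of $|s|^{p-4}t^{3/2}$, which will cancel the $t^{-3/2}$ prefactor to give a bound independent of $t$. Without loss of generality I would take $s \ge 0$; the case $s = 0$ is trivial since the right-hand side vanishes for $p > 4$.

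The naive move would be to use midpoint convexity of $z \mapsto |z|^{p-4}$ on $[s-y, s+y]$ to bound the innermost integral below by $2y|s|^{p-4}$. I would explicitly avoid this, since it fails in the range $4 < p < 5$ where $z^{p-4}$ is strictly concave on $(0,\infty)$ (one can check numerically, e.g.\ with $p = 4.5$, $s = 1$, $y = 1/2$). Instead I would use the asymmetric one-sided estimate
\begin{align*}
    \int_{s-y}^{s+y}|z|^{p-4}\,dz \;\ge\; \int_{s}^{s+y}z^{p-4}\,dz \;\ge\; y\,s^{p-4},
\end{align*}
valid for every $p > 4$: the first step drops a non-negative contribution and the second uses that $z \mapsto z^{p-4}$ is non-decreasing on $(0,\infty)$, so it is bounded below by $s^{p-4}$ on $[s, s+y]$.

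Then I would apply Fubini to the outer two variables, rewriting $\int_0^{\sqrt t}\int_x^{\sqrt t}(\,\cdot\,)\,dy\,dx = \int_0^{\sqrt t} y\,(\,\cdot\,)\,dy$, substitute the bound above, and evaluate $\int_0^{\sqrt t} y^2\,dy = t^{3/2}/3$. Plugging back into \eqref{formula1} gives
\begin{align*}
    \psi_s''(t) \;\ge\; \frac{p(p-1)(p-2)(p-3)}{12}\,|s|^{p-4},
\end{align*}
which exceeds $\tfrac{3p(p-1)(p-2)(p-3)}{64}|s|^{p-4}$ since $\tfrac{1}{12} > \tfrac{3}{64}$ (equivalently $64 > 36$). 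The equivalent convexity statement is then immediate, as subtracting $\tfrac{3p(p-1)(p-2)(p-3)}{128}|s|^{p-4}t^2$ has the effect of subtracting exactly $\tfrac{3p(p-1)(p-2)(p-3)}{64}|s|^{p-4}$ from $\psi_s''(t)$, leaving a non-negative quantity. The only genuine obstacle is realizing that the symmetric midpoint bound breaks for $p\in(4,5)$; the asymmetric one-sided estimate circumvents this cleanly and gives a constant which, while not sharp, comfortably beats the one asserted in the lemma.
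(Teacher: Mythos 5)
Your proof is correct and follows essentially the same route as the paper: both start from formula \eqref{formula1}, discard the portion of the innermost integral with $z < s$, and use monotonicity of $z \mapsto z^{p-4}$ for $p > 4$ to bound $|z|^{p-4}$ below by $|s|^{p-4}$; the paper merely applies Fubini first and then restricts $z$ to $\left[s, s + \tfrac{\sqrt{t}}{2}\right]$, which is why its constant $\tfrac{3}{64}$ is slightly smaller than your $\tfrac{1}{12}$. Your sharper constant of course implies the stated bound, and your caution about the failure of the symmetric Jensen-type estimate for $4 < p < 5$ is well taken.
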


    \begin{proof}
        Assume for simplicity $s \ge 0$ and denote $\varphi_s(t) = \int_0^{\sqrt{t}}\int_x^{\sqrt{t}}\int_{s-y}^{s+y}|z|^{p-4}dzdydx$. By Fubini's Theorem we get
        \begin{align*}
            \varphi_s(t) &= \int_0^{\infty}\int_0^{\infty}\int_{-\infty}^{\infty}|z|^{p-4}\1_{|z-s| \le y}\1_{x \le y \le \sqrt{t}}dzdydx \\ &= \int_{-\infty}^{\infty}|z|^{p-4}\left|\left\{(x, y) \in \R^2 : 0 \le x \le y, |z - s| \le y \le \sqrt{t}\right\}\right|dz \\ &= \int_{s - \sqrt{t}}^{s + \sqrt{t}}|z|^{p-4}\int_{|z-s|}^{\sqrt{t}}\int_0^y1dxdydz = \int_{s - \sqrt{t}}^{s + \sqrt{t}}|z|^{p-4}\int_{|z-s|}^{\sqrt{t}}ydydz \\ &= \frac 1 2 \int_{s - \sqrt{t}}^{s + \sqrt{t}}|z|^{p-4}\left(t - |z-s|^2\right)dz \ge \frac 1 2\int_s^{s + \frac{\sqrt{t}}{2}}|z|^{p-4}\left(t - |z-s|^2\right)dz \\ &\ge \frac{\sqrt{t}}{4} \cdot |s|^{p-4} \cdot \frac{3t}{4} = \frac{3t^{\frac 3 2}|s|^{p-4}}{16},
        \end{align*}
        where in the last inequality we used $z \ge s$ and $|z-s|^2 \le \frac t 4$. Hence
        \begin{align*}
            \psi_s''(t) &= \frac{p(p-1)(p-2)(p-3)}{4t^{\frac 3 2}}\varphi_s(t) \ge \frac{p(p-1)(p-2)(p-3)}{4t^{\frac 3 2}} \cdot \frac{3t^{\frac 3 2}|s|^{p-4}}{16} \\ &= \frac{3p(p-1)(p-2)(p-3)}{64}|s|^{p-4}.
        \end{align*}
    \end{proof}

    Both bounds obtained above for $p \ge 4$ use the fact that the function $z \mapsto |z|^{p-4}$ is non-decreasing on $[0, \infty)$. For $3 < p < 4$ this function is non-increasing, which results with the different bound.

    \begin{lem}\label{p3bound}
        For $3 < p < 4$ we have
        \begin{align*}
            \psi_s''(t) \ge \frac{p(p-1)(p-2)(p-3)}{6}\left(|s| + \sqrt{t}\right)^{p-4}.
        \end{align*}
    \end{lem}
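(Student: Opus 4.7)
The plan is to exploit the integral representation \eqref{formula1} from Lemma \ref{formula} together with the sign of $p-4$. Since $3 < p < 4$, the exponent $p-4$ is negative, so the map $z \mapsto |z|^{p-4}$ is \emph{decreasing} on $(0, \infty)$. This means that unlike the $p \ge 4$ case (handled in Lemma \ref{p4bound} by restricting to $z \in [s, s+\tfrac{\sqrt{t}}{2}]$ where $|z|^{p-4}$ is at least $|s|^{p-4}$), here the right strategy is to obtain a lower bound on $|z|^{p-4}$ by bounding $|z|$ from \emph{above} on the entire integration domain.

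WLOG assume $s \ge 0$. The key observation is that whenever $(x,y,z)$ lies in the integration region $0 \le x \le y \le \sqrt{t}$ and $s-y \le z \le s+y$, we automatically have
\begin{align*}
    |z| \le s + y \le s + \sqrt{t},
\end{align*}
and therefore $|z|^{p-4} \ge (s + \sqrt{t})^{p-4}$ since $p-4 < 0$. Plugging this uniform bound into the innermost integral gives $\int_{s-y}^{s+y}|z|^{p-4}\,dz \ge 2y\,(s+\sqrt{t})^{p-4}$.

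It then remains to evaluate the elementary double integral
\begin{align*}
    \int_0^{\sqrt{t}}\int_x^{\sqrt{t}} 2y\, dy\, dx = \int_0^{\sqrt{t}} (t - x^2)\, dx = \frac{2t^{3/2}}{3}.
\end{align*}
Substituting back into \eqref{formula1} yields
\begin{align*}
    \psi_s''(t) \ge \frac{p(p-1)(p-2)(p-3)}{4t^{3/2}} \cdot (s+\sqrt{t})^{p-4} \cdot \frac{2t^{3/2}}{3} = \frac{p(p-1)(p-2)(p-3)}{6}(s+\sqrt{t})^{p-4},
\end{align*}
which is exactly the claimed bound. There is essentially no obstacle here: the monotonicity of $|z|^{p-4}$ in the regime $p < 4$ dictates the direction of the estimate, and the bound $|z| \le s + \sqrt{t}$ is sharp at the corner of the integration region, so one should not expect much room for improvement from this approach.
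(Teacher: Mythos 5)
Your proposal is correct and is essentially identical to the paper's proof: both use the uniform bound $|z| \le |s| + \sqrt{t}$ on the domain of \eqref{formula1}, exploit $p-4<0$ to get $|z|^{p-4} \ge (|s|+\sqrt{t})^{p-4}$, and evaluate $\int_0^{\sqrt{t}}(t-x^2)\,dx = \tfrac{2}{3}t^{3/2}$. No differences worth noting.
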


    \begin{proof}
        Again we assume $s \ge 0$ and denote $\varphi_s(t) = \int_0^{\sqrt{t}}\int_x^{\sqrt{t}}\int_{s-y}^{s+y}|z|^{p-4}dzdydx$. For $z$ under the integral we have $|z| \le s + \sqrt{t}$ and hence $|z|^{p-4} \ge \left(s + \sqrt{t}\right)^{p-4}$. Thus
        \begin{align*}
            \varphi_s(t) &\ge \int_0^{\sqrt{t}}\int_x^{\sqrt{t}}2y\left(s + \sqrt{t}\right)^{p-4}dydx = \left(s + \sqrt{t}\right)^{p-4}\int_0^{\sqrt{t}}\left(t - x^2\right)dx = \left(s + \sqrt{t}\right)^{p-4} \cdot \frac 2 3 t^{\frac 3 2}.
        \end{align*}
        Together with formula \eqref{formula1} it ends the proof.
    \end{proof}

    \subsection{Concentration bounds on Rademacher sums}

    In the upcoming sections we shall usually take $\psi_s$ with random parameter $s$ equal to some Rademacher sum. Bound from Lemma \ref{p3bound} and formula \eqref{formula2} for $p = 3$ clearly are useful only for small values of $s$. Thus we need some concentration bounds on Rademacher sums.

    We begin with recalling a simple proposition, being an immediate consequence of Chernoff bound.

    \begin{prop}\label{chernoff}
        Let $a_1, \ldots, a_n \ge 0$ be such that $\sum_{i=1}^na_i^2 \le 1$ and $S = \sum_{i=1}^na_i\e_i$. Then
        \begin{align*}
            \Prb(|S| \le 2) \ge 1 - 2e^{-1}.
        \end{align*}
    \end{prop}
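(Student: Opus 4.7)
The plan is to apply the classical Chernoff exponential-moment method directly to the symmetric Rademacher sum $S$. First I would invoke the standard cosh estimate
\begin{align*}
    \E e^{\lambda a_i \e_i} = \cosh(\lambda a_i) \le e^{\lambda^2 a_i^2/2}, \qquad \lambda \in \R,
\end{align*}
which follows from the termwise comparison of Taylor coefficients $\frac{1}{(2k)!} \le \frac{1}{2^k k!}$. By independence of the $\e_i$, this gives the moment generating function bound
\begin{align*}
    \E e^{\lambda S} = \prod_{i=1}^n \cosh(\lambda a_i) \le \exp\left(\frac{\lambda^2}{2}\sum_{i=1}^n a_i^2\right) \le e^{\lambda^2/2},
\end{align*}
where in the last step I use the hypothesis $\sum_{i=1}^n a_i^2 \le 1$.

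Next I would apply Markov's inequality to deduce $\Prb(S \ge t) \le e^{-\lambda t}\E e^{\lambda S} \le e^{\lambda^2/2 - \lambda t}$ for every $\lambda, t > 0$, and specialize to $\lambda = t = 2$ to obtain $\Prb(S \ge 2) \le e^{-2}$. Because the distribution of $S$ is symmetric, the same bound holds for $-S$, so a union bound yields $\Prb(|S| \ge 2) \le 2e^{-2} \le 2e^{-1}$, which rearranges to the stated inequality $\Prb(|S| \le 2) \ge 1 - 2e^{-1}$.

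There is no genuine obstacle: the whole argument is textbook Chernoff, and in fact it delivers the strictly stronger constant $2e^{-2}$ on the right-hand side. The looser formulation in the statement is presumably chosen for notational convenience in the applications of the proposition later in the paper.
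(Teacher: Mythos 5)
Your proof is correct and is essentially the paper's argument: the paper simply cites the standard Chernoff bound for Rademacher sums (in the weaker form $\Prb(S > t) \le e^{-t^2/4}$), applies it at $t = 2$ and uses symmetry, whereas you derive the bound from scratch via the $\cosh(\lambda a_i) \le e^{\lambda^2 a_i^2/2}$ estimate and Markov's inequality. Your self-contained derivation even yields the sharper tail $e^{-2}$ in place of the paper's $e^{-1}$, which you correctly relax to match the stated constant $1 - 2e^{-1}$.
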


    \begin{proof}
        By a well known Chernoff bound for Rademacher sums we have
        \begin{align*}
            \Prb(S > t) \le e^{-\frac{t^2}{4}}
        \end{align*}
        for any $t > 0$. It suffices to apply it with $t = 2$ and use the symmetry of $S$.
    \end{proof}

    Proposition \ref{chernoff} gives us bound on the probability that a Rademacher sum is contained in some constant size interval. We shall also need concentration on the intervals of size depending on the largest coefficient of the Rademacher sum. We shall extensively use the following proposition.

    \begin{prop}[Lemma 1 in \cite{DG}]\label{BE}
        Let $1 \ge a \ge a_1 \ge a_2 \ge \ldots \ge a_n \ge 0$ be such that $\sum_{i=1}^na_i^2 \le 1$, $S = \sum_{i=1}^na_i\e_i$. Then there exists a universal constant $c \ge \frac{3}{16}$ such that
        \begin{align*}
            \Prb(|S| \le a) \ge ca.
        \end{align*}
    \end{prop}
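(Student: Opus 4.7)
The plan is to combine a second-moment concentration bound with a pigeonhole argument on intervals of length $2a$, together with a comparison step that passes from a generic interval of that length to the centered one $[-a,a]$. By the symmetry of the $\e_i$'s I may assume $a_i \ge 0$ for all $i$, as replacing $\e_i$ by $\mathrm{sign}(a_i)\e_i$ does not change the distribution of $S$.

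First, since $\E S^2 = \sum_i a_i^2 \le 1$, Chebyshev's inequality yields
\begin{align*}
    \Prb(|S| \le 2) \ge 1 - \frac{\E S^2}{4} \ge \frac{3}{4}.
\end{align*}
Second, the interval $[-2,2]$ is covered by at most $N = \lceil 2/a\rceil + 1 \le 4/a$ closed sub-intervals of length $2a$ each (using $a \le 1$), and by the pigeonhole principle some such sub-interval $I^{\ast}$ carries mass
\begin{align*}
    \Prb(S \in I^{\ast}) \ge \frac{3/4}{N} \ge \frac{3a}{16}.
\end{align*}

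The main obstacle is the third step: transferring this estimate to the specific interval $[-a,a]$. The key fact needed is that, under the hypothesis $a \ge \max_i a_i$, the probability $\Prb(S \in J)$ over all intervals $J$ of length $2a$ is controlled by $\Prb(|S| \le a)$ up to a bounded factor. My plan is to establish this by an iterative peeling argument. Writing $S = a_k \e_k + S'$ with $S'$ independent of $\e_k$ and symmetric, and conditioning on $\e_k$, one obtains the identity
\begin{align*}
    \Prb(S \in [b-a, b+a]) = \tfrac{1}{2}\bigl[\Prb(S' \in [b-a_k-a,\, b-a_k+a]) + \Prb(S' \in [b+a_k-a,\, b+a_k+a])\bigr].
\end{align*}
Since $a \ge a_k$, the two shifted intervals on the right overlap, and the symmetry $S' \stackrel{d}{=} -S'$ allows one to ``drive'' the centre of the interval towards the origin by a controlled amount at each stage. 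Iterating across all Rademacher summands, and carefully tracking the loss incurred at each step, transfers the mass from $I^{\ast}$ back to $[-a,a]$ and yields the universal constant $c \ge 3/16$ claimed in the Lemma; this constant-tracking through the peeling is the part I expect to be the most delicate.
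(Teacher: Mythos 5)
The paper does not prove this proposition at all: it is quoted as Lemma~1 of Dzindzalieta--G\"otze \cite{DG} (the qualitative version goes back to \cite{KKMO}), and the quoted proof splits into the cases ``$a_1$ large'' and ``$a_1$ small'', the latter handled by a carefully analysed Berry--Esseen bound. Measured against that, your first two steps (Chebyshev giving $\Prb(|S|\le 2)\ge \tfrac34$, and pigeonhole producing an interval $I^{*}$ of length $2a$ with $\Prb(S\in I^{*})\ge \tfrac{3a}{16}$) are correct but routine; the entire content of the lemma is concentrated in your third step, and that step is only announced, not proved. The transfer statement you need --- $\Prb(S\in J)\le C\,\Prb(|S|\le a)$ for \emph{every} interval $J$ of length $2a$, with a universal $C$, under the sole hypothesis $a\ge \max_i a_i$ --- is an anti-concentration claim of essentially the same depth as the proposition itself, and the peeling mechanism you sketch does not deliver it. Conditioning on $\e_k$ replaces one interval by two shifted copies; to make progress you must either keep only the branch whose centre is nearer the origin, losing a factor $\tfrac12$ per step (and when $a_k\approx a\approx 1/\sqrt n$ the centre must travel a distance of order $1$, i.e.\ of order $\sqrt n$ steps, so the accumulated loss is exponential in $\sqrt n$, not a universal constant), or run an induction of the form ``the centred interval is $C$-maximal'', in which case the step $\Prb(S\in[b-a,b+a])=\tfrac12\bigl(M'(b-a_k)+M'(b+a_k)\bigr)\le C\,M'(0)\le 2C\,\Prb(|S|\le a)$ degrades the constant at every stage. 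The symmetry $S'\stackrel{d}{=}-S'$ only helps once the interval actually straddles the origin, so it does not rescue the intermediate steps.

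Two further points. First, the centred interval is genuinely not maximal: for $n=2$, $a_1=a_2=a$ one has $\Prb(S\in[0,2a])=\tfrac34$ while $\Prb(|S|\le a)=\tfrac12$, so any transfer constant must satisfy $C\ge\tfrac32$; hence even a completed version of your programme certifies at most $c=\tfrac{3}{16C}\le\tfrac18$, falling short of the stated $c\ge\tfrac{3}{16}$, which the paper uses numerically (the factor $\tfrac{9}{32}$ in the case $p=3$ of Lemma \ref{lemgauss} and the factor $\tfrac{3}{16}$ in \eqref{conc}). A smaller universal constant would still suffice for the qualitative theorems, but the explicit constants would change. Second, the fact that the published proofs resort to a case split on the size of $a_1$, with Berry--Esseen comparison to the Gaussian (for which the centred interval \emph{is} maximal) in the small-$a_1$ case, is a strong indication that no soft one-coordinate-at-a-time argument of the kind you propose closes the gap; as it stands, your step three is a missing proof, not a delicate detail.
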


    The above result was proved with unspecified constant in \cite{KKMO}, but we cite it from Dzindzalieta, G\"otze \cite{DG}. Their proof relies on splitting into cases ``$a_1$ is large" and ``$a_1$ is small", using known concentration results of Rademacher sums in the first case and carefully analyzed Berry-Esseen bound in the second case. 

    \begin{rem}\label{BEopt}
        Note that in Proposition \ref{BE} the assumption $a \ge a_1$ is crucial. Consider $n$ odd, $a_1 = \ldots = a_n = \frac{1}{\sqrt{n}}$. Then $\Prb\left(|S| < \frac{1}{\sqrt{n}}\right) = 0$. It shows that for $a < a_1$ we cannot hope for any reasonable lower bound on $\Prb(|S| \le a)$.
    \end{rem}

    We shall also need concentration bounds for Rademacher sums with additional Gaussian component. Let $S' = S + bG$, where $S$ is the same as in Proposition \ref{chernoff} or \ref{BE}, $G$ is independent of $S$ and $b \ge 0$ is such that $b^2 + \sum_{i=1}^ka_i^2 \le 1$. Denote $S'_n = \frac{b}{\sqrt{n}}\sum_{i=1}^n\rho_i + S$, where $\rho_1, \ldots, \rho_n$ are i.i.d. Rademacher variables independent of $S$, then $S'_n$ by CLT converges in distribution to $S'$ and the statements of Proposition \ref{chernoff} and \ref{BE} are satisfied for sums $S'_n$ (in the case of Proposition \ref{BE} we must require that $n$ is sufficiently large). Thus by passing to the limit with $n \to \infty$ we obtain the following corollary.

    \begin{cor}\label{Gaussmod}
        Propositions \ref{chernoff} and \ref{BE} hold true with $S$ replaced by $S + bG$, where $G$ is independent of $S$ and $b \ge 0$ is such that $b^2 + \sum_{i=1}^ka_i^2 \le 1$.
    \end{cor}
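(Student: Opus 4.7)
The plan is the one foreshadowed in the paragraph preceding the statement: replace the Gaussian component $bG$ by a normalized Rademacher sum, apply the already established propositions to the purely Rademacher version, and then pass to the limit.

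First I would introduce $S_m' := S + \tfrac{b}{\sqrt{m}}\sum_{j=1}^m \rho_j$, where $\rho_1, \ldots, \rho_m$ are i.i.d.\ Rademacher variables independent of $\e_1, \ldots, \e_k$. Then $S_m'$ is itself a Rademacher sum, with coefficient vector $(a_1, \ldots, a_k, \tfrac{b}{\sqrt{m}}, \ldots, \tfrac{b}{\sqrt{m}})$ whose squared $\ell^2$-norm equals $\sum_{i=1}^k a_i^2 + b^2 \le 1$. Hence Proposition \ref{chernoff} applies to $S_m'$ for every $m$, and once $m$ is large enough that $\tfrac{b}{\sqrt{m}} \le a_1$, the ordering hypothesis $a \ge \max_i |\text{coef}_i|$ of Proposition \ref{BE} is met as well (with the same $a \ge a_1$). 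Consequently, uniformly in such $m$,
\[
\Prb(|S_m'| \le 2) \ge 1 - 2e^{-1}, \qquad \Prb(|S_m'| \le a) \ge c a.
\]

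Next I would invoke the classical central limit theorem, $\tfrac{1}{\sqrt{m}}\sum_j \rho_j \xrightarrow{d} G$, and use the independence of $S$ from $(\rho_j)_j$ (factoring characteristic functions, say) to upgrade this to $S_m' \xrightarrow{d} S + bG$. The final step is to transfer the two uniform bounds to the limit; the cleanest tool is the closed-set form of the Portmanteau theorem, $\Prb(X \in F) \ge \limsup_m \Prb(X_m \in F)$, applied to $F = \{x \in \R : |x| \le r\}$ with $r \in \{2, a\}$.

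There is essentially no obstacle --- it is a routine CLT plus a Portmanteau limit. The only point worth flagging is the direction of the Portmanteau inequality: we need the closed-set / $\limsup$ version so that the lower bound on the probability descends from the approximants to $S + bG$. In fact, when $b > 0$ the limit $S + bG$ has a density, so every $r$ is a continuity point of the law of $|S + bG|$ and $\Prb(|S_m'| \le r) \to \Prb(|S + bG| \le r)$ outright; the case $b = 0$ reduces directly to the original propositions.
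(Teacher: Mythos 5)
Your argument is correct and is essentially the paper's own proof: approximate $bG$ by $\tfrac{b}{\sqrt m}\sum_j\rho_j$, apply Propositions \ref{chernoff} and \ref{BE} to the resulting Rademacher sum (with $m$ large enough that $b/\sqrt m \le a$), and pass to the limit via the CLT. Your explicit appeal to the closed-set Portmanteau inequality just spells out the limit step the paper leaves implicit.
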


    \section{Gaussian stability}

    Recall that the strategy of the proof relies on exchanging Rademacher variables for Gaussians one by one, estimating the deficit on each of these replacement and summing them up to get the estimate of the total deficit. The following lemma provides a lower bound for single exchange.

    \begin{lem}\label{lemgauss}
        Let $\e, \e_1, \ldots, \e_k$ be i.i.d. Rademacher variables, $G$, $G'$ be i.i.d. standard Gaussian variables independent of $(\e_1, \ldots, \e_k)$, $b \ge 0$ and $\frac{1}{\sqrt{2}} \ge a \ge a_1 \ge \ldots \ge a_k \ge 0$ be such that $a^2 + b^2 + \sum_{i=1}^ka_i^2 = 1$, $S = bG' + \sum_{i=1}^ka_i\e_i$. Then
        \begin{align*}
            \E|S + aG|^p - \E|S + a\e|^p \ge C_pa^4,
        \end{align*}
        where $C_p$ is the same constant as in Theorem \ref{gauss}.
    \end{lem}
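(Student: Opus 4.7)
The plan is to condition on $S$ and reduce the task to comparing $\psi_s$ at a random argument with its value at the mean. From $\E|s+aG|^p = \frac{1}{2}\E\psi_s(a^2 G^2)$ (by the symmetry $aG\overset{d}{=}-aG$ together with the definition of $\psi_s$) and $\E|s+a\e|^p = \frac{1}{2}\psi_s(a^2)$ one has
\[
\E|S+aG|^p - \E|S+a\e|^p = \frac{1}{2}\,\E_S\!\left[\E_G\psi_S(a^2 G^2) - \psi_S(a^2)\right].
\]
Since $\E G^2 = 1$, a second-order Taylor expansion rewrites the inner difference as
\[
\E_G\!\int_{a^2}^{a^2 G^2}\psi_s''(u)\,(a^2 G^2 - u)\,du,
\]
whose integrand is pointwise nonnegative. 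The task reduces to exhibiting, on a sufficiently likely event in $(S,G)$, a useful lower bound on $\psi_s''$ in the range of integration.

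For $p\ge 4$ the bounds from Section~2 are uniform in $u$. When $p=4$, Corollary~\ref{ezbound} gives $\psi_s''(u)\ge 4$, and a direct integration combined with $\Var(G^2)=2$ produces an inner deficit of $4a^4$. When $p>4$, Lemma~\ref{p4bound} gives $\psi_s''(u)\ge c_p|s|^{p-4}$, so the inner deficit is at least $c_p|s|^{p-4}a^4$; taking the expectation over $S$ requires only that $\E|S|^{p-4}$ be bounded below by a positive constant, which follows from $\E S^2 = 1-a^2 \ge \frac{1}{2}$, the Khintchine bound $\E S^4\le 3$, and Paley--Zygmund. For $3<p<4$, Lemma~\ref{p3bound} gives $\psi_s''(u)\ge c_p(|s|+\sqrt u)^{p-4}$ with $p-4<0$, so I first truncate to $\{|S|\le 2\}$ (probability $\ge 1-2e^{-1}$ by Proposition~\ref{chernoff} and Corollary~\ref{Gaussmod}) and to $\{G^2\in[1,4]\}$ (positive probability): on this event $|s|+\sqrt u\le 3$, hence $\psi_s''(u)\ge c_p'$ for some constant depending only on $p$, and integration again produces an $a^4$ deficit.

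The critical case $p=3$ will be the main obstacle. The explicit formula $\psi_s''(u)=\frac{3(u-s^2)_+}{2u^{3/2}}$ from \eqref{formula2} vanishes whenever $s^2>u$, so a constant-probability restriction like $\{|S|\le 2\}$ no longer suffices. My plan is to restrict instead to $\{|S|\le a\}$, whose probability is at least $ca$ by Proposition~\ref{BE} together with Corollary~\ref{Gaussmod}---here the hypothesis $a\ge a_1$ becomes essential. On this event $s^2\le a^2\le u$ for every $u\in[a^2,a^2 G^2]$; restricting further to $\{G^2\in[2,4]\}$ gives $u\le 4a^2$, so $\psi_s''(u)\ge\frac{3(u-a^2)}{16 a^3}$, and integrating the Taylor remainder yields
\[
\int_{a^2}^{a^2 G^2}\frac{3(u-a^2)}{16 a^3}(a^2G^2-u)\,du \;=\; \frac{a^3(G^2-1)^3}{32},
\]
which is bounded below by a positive constant times $a^3$ on $\{G^2\in[2,4]\}$. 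Multiplying this inner $a^3$ by the outer probability $\ge ca$ of $\{|S|\le a\}$ recovers the full $a^4$. The delicate point is exactly this pairing: in the $p=3$ regime one power of $a$ is lost in the Taylor estimate, and it can only be recovered through the sharper small-ball bound of Proposition~\ref{BE} rather than the coarser Chernoff bound used for $p>3$.
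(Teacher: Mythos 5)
Your proposal is correct, and its skeleton is the same as the paper's: the identity $\E|S+aG|^p-\E|S+a\e|^p=\tfrac12\E[\psi_S(a^2G^2)-\psi_S(a^2)]$, the case split at $p=4$, Corollary \ref{ezbound} for $p=4$, Lemma \ref{p4bound} plus a lower bound on $\E|S|^{p-4}$ for $p>4$, Lemma \ref{p3bound} with Proposition \ref{chernoff} for $3<p<4$, and formula \eqref{formula2} with the small-ball bound of Proposition \ref{BE} (via Corollary \ref{Gaussmod}) for $p=3$ --- including the key observation that the lost power of $a$ at $p=3$ is recovered from $\Prb(|S|\le a)\ge ca$, which is exactly where the paper uses $a\ge a_1$. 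The genuine deviations are two. First, for $p>4$ you bound $\E|S|^{p-4}$ below by Paley--Zygmund using $\E S^2=1-a^2\ge\tfrac12$ and $\E S^4\le 3(\E S^2)^2$ (which indeed holds for $S=bG'+\sum_i a_i\e_i$); the paper instead splits into $p\ge 6$ (power-mean) and $4<p<6$, where it invokes Haagerup's optimal Khintchine constants together with a CLT approximation of the Gaussian component. Your route is more elementary and handles all $p>4$ uniformly, at the cost of a cruder constant; the paper's route yields the explicit constants listed in Section 5. Second, for $3<p\le 4$ and $p=3$ you truncate $G^2$ to a fixed window instead of integrating the Gaussian tail as in \eqref{g1}; both give a constant depending only on $p$, yours simpler, the paper's explicit. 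Two cosmetic slips do not affect the argument: on $\{|S|\le 2,\ G^2\le 4\}$ one has $|s|+\sqrt u\le 2+\sqrt2>3$ (any absolute bound suffices), and for $G^2<1$ it is the Taylor remainder, not the integrand, that is nonnegative (by convexity of $\psi_s$), which is all you actually use when restricting to events.
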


    \begin{proof}
        We have
        \begin{align*}
            \E|S + aG|^p - \E|S + a\e|^p &= \E|S + a\e|G||^p - \E|S + a\e|^p \\ &= \frac 1 2\left(\E\psi_S\left(a^2G^2\right) - \E\psi_S\left(a^2\right)\right) = \frac 1 2\E\left[\psi_S\left(a^2G^2\right) - \psi_S\left(a^2\right)\right].
        \end{align*}
        Thus it suffices to prove that $\E\left[\psi_S\left(a^2G^2\right) - \psi_S\left(a^2\right)\right] \ge 2C_pa^4$. Now we split the proof into four cases.\\

        \textit{Case $p > 4$:} By Lemma \ref{p4bound} we know that the function $f_S(t) = \psi_S(t) - c_p|S|^{p-4}t^2$, where $c_p = \frac{3p(p-1)(p-2)(p-3)}{128}$, is convex in $t$ on $(0, \infty)$. By Jensen's inequality we have
        \begin{align*}
            \E_Gf_S(a^2G^2) \ge f_S\left(\E_Ga^2G^2\right) = f_S(a^2),
        \end{align*}
        which by conditioning on $S$ and simple algebra yields
        \begin{align*}
            \E\left[\psi_S(a^2G^2) - \psi_S(a^2)\right] \ge c_p\E\left[|S|^{p-4}(a^4G^4 - a^4)\right] = c_p\E|S|^{p-4}a^4\E[G^4 - 1] = 2c_pa^4\E|S|^{p-4}.
        \end{align*}
        It remains to prove that $\E|S|^{p-4}$ can be bounded from below by a constant depending only on $p$. For $p \ge 6$ we simply get
        \begin{align*}
            \E|S|^{p-4} \ge \left(\E|S|^2\right)^{\frac{p-4}{2}} = (1 - a^2)^{\frac{p-4}{2}} \ge 2^{\frac{4-p}{2}},
        \end{align*}
        which ends the proof with $C_p = 2^{\frac{4-p}{2}}c_p$. Now assume $4 < p < 6$. Let $\rho_1, \ldots, \rho_n$ be i.i.d. Rademacher variables independent of $(\e_1, \ldots, \e_k)$ and $S_n = b\sum_{i=1}^n\frac{\rho_i}{\sqrt{n}} + \sum_{i=1}^ka_i\e_i$. Then by Khintchine inequalities with known optimal constants (c.f. \cite{H}) there exists a constant $\widetilde{C}_p > 0$ such that $\E|S_n|^{p-4} \ge \widetilde{C}_p(1 - a^2)^{\frac{p-4}{2}} \ge \widetilde{C}_p2^{\frac{4-p}{2}}$. As by CLT $S_n$ converges to $S$ in distribution and the second moments of $S_n$ and $S$ are uniformly bounded, we get $\E|S_n|^{p-4} \xrightarrow{n \to \infty} \E|S|^{p-4}$ (recall that $p - 4 < 2$ in the considered case) and
        \begin{align*}
            \E|S|^{p-4} \ge \widetilde{C}_p2^{\frac{4-p}{2}}.
        \end{align*}
        This yields the statement with $C_p = 2^{\frac{4-p}{2}}c_p\widetilde{C}_p$.\\

        \textit{Baby case $p = 4$:} To avoid the term $0^0$ which would appear in the above reasoning if $p = 4$ and $\Prb(S = 0) > 0$, we proceed as above with $f_S(t) = \psi_S(t) - 2t^2$ and using Corollary \ref{ezbound} instead of Lemma \ref{p4bound}. It gives
        \begin{align*}
            \E\left[\psi_S(a^2G^2) - \psi_S(a^2)\right] \ge 2a^4\E[G^4 - 1] = 4a^4.
        \end{align*}

        \textit{Case $3 < p < 4$:} By the Taylor expansion with integral remainder in the first equality and substitution $u = a^2t$ in the second, and using the convention $\int_1^x = -\int_x^1$ for $x < 1$, we have
        \begin{align}\label{taylor1}
            \E\left[\psi_S(a^2G^2) - \psi_S(a^2)\right] &= \E\left[\psi_S'(a^2)\left(a^2G^2 - a^2\right) + \int_{a^2}^{a^2G^2}\psi_S''(u)\left(a^2G^2 - u\right)du\right] \\ &= a^4\E\int_1^{G^2}\psi_S''(a^2t)(G^2 - t)dt \ge a^4\int_1^{\infty}\E\left[\1_{t \le G^2}(G^2 - t)\psi_S''(a^2t)\right]dt. \notag
        \end{align}
        By a known Gaussian tail estimate $\int_t^{\infty}e^{-\frac{u^2}{2}}du \ge \left(\frac 1 t - \frac{1}{t^3}\right)e^{-\frac{t^2}{2}}$, for $t \ge 1$ we get
        \begin{align}\label{g1}
            \E\left[1_{t \le G^2}(G^2 - t)\right] \ge \Prb\left(G^2 \ge t + 1\right) \ge \frac{2}{\sqrt{2\pi}}e^{-\frac{t+1}{2}}\left(\frac{1}{\sqrt{t+1}} - \frac{1}{(t + 1)^{\frac 3 2}}\right).
        \end{align}
        Using Lemma \ref{p3bound}, Proposition \ref{chernoff} with Corollary \ref{Gaussmod}, $p-4 < 0$, $a \le 1$ and substituting $c_p = \frac{p(p-1)(p-2)(p-3)}{6} > 0$ we obtain
        \begin{align}\label{s1}
            \E\psi_S''(a^2t) &\ge c_p\E\1_{|S| \le 2}\left||S| + a\sqrt{t}\right|^{p-4} \ge c_p\Prb(|S| \le 2)\left(2 + a\sqrt{t}\right)^{p-4} \\ &\ge c_p\left(1 - 2e^{-1}\right)\left(2 + \sqrt{t}\right)^{p-4}. \notag
        \end{align}
        By \eqref{g1} and \eqref{s1} we get
        \begin{align*}
            \int_1^{\infty}\E\left[\1_{t \le G^2}(G^2 - t)\psi_S''(a^2t)\right]dt \ge \frac{2c_p\left(1 - 2e^{-1}\right)}{\sqrt{2\pi}}\int_1^{\infty}\frac{t}{(t+1)\sqrt{t+1}}\left(2 + \sqrt{t}\right)^{p-4}e^{-\frac{t+1}{2}}dt,
        \end{align*}
        which is positive and depends only on $p$. Thus by \eqref{taylor1} we have
        \begin{align*}
            \E\left[\psi_S(a^2G^2) - \psi_S(a^2)\right] \ge 2C_pa^4.
        \end{align*}

        \textit{Case $p = 3$:} Analogously to the previous case we get
        \begin{align}\label{taylor2}
            \E\left[\psi_S(a^2G^2) - \psi_S(a^2)\right] \ge a^4\int_1^{\infty}\E\left[\1_{t \le G^2}(G^2 - t)\psi_S''(a^2t)\right]dt.
        \end{align}
        Now by formula \eqref{formula2} and Proposition \ref{BE} with Corollary \ref{Gaussmod} we have
        \begin{align*}
            \E\psi_S''(a^2t) = \E\left[\frac{3(a^2t - S^2)_+}{2a^3t^{\frac 3 2}}\right] \ge \Prb(|S| \le a)\frac{3(t - 1)}{2at^{\frac 3 2}} \ge \frac{9(t - 1)}{32t^{\frac 3 2}},
        \end{align*}
        which together with \eqref{g1} and \eqref{taylor2} gives
        \begin{align*}
            \E\left[\psi_S(a^2G^2) - \psi_S(a^2)\right] \ge \frac{9a^4}{16\sqrt{2\pi}}\int_1^{\infty}\frac{t}{(t+1)\sqrt{t+1}}\frac{t-1}{t\sqrt{t}}e^{-\frac{t+1}{2}}dt =: 2C_pa^4.
        \end{align*}
    \end{proof}

    We are now ready to prove Theorem \ref{gauss}.

    \begin{proof}[Proof of Theorem \ref{gauss}]
        Suppose that $a_1 \le \frac{1}{\sqrt{2}}$. Let $G_1, \ldots, G_n$ be i.i.d. standard Gaussian variables independent of $(\e_1, \ldots, \e_n)$. Denoting $S^i = \sum_{j=1}^{i-1}a_jG_j + \sum_{j=i+1}^na_j\e_j$ for $i = 1, \ldots, n$ we have
        \begin{align*}
            \E|G|^p - \E|S|^p = \sum_{i=1}^n\left(\E|S^i + a_iG_i|^p - \E|S^i + a_i\e_i|^p\right).
        \end{align*}
        Thus it suffices to prove that $\E|S^i + a_iG_i|^p - \E|S^i + a_i\e_i|^p \ge C_pa_i^4$ for some constant $C_p > 0$. Now Lemma \ref{lemgauss} easily ends the proof.

        Now consider the remaining case $a_1 > \frac{1}{\sqrt{2}}$. For $n = 1$ there is nothing to prove. Assume $n \ge 2$, then obviously $a_i < \frac{1}{\sqrt{2}}$ for $i > 1$ and we may perform a sequence of $T$-transformations on the vector $\left(a_1^2, \ldots, a_n^2\right)$ nearing $a_1$ to the other coefficients until $a_1 = \frac{1}{\sqrt{2}}$. Denote the resulting vector of coefficients by $(b_1, \ldots, b_n)$, then $\frac{1}{\sqrt{2}} = b_1 \ge b_2 \ge \ldots \ge b_n$ and from the previous part of the proof we have
        \begin{align*}
            \E\left|\sum_{i=1}^nb_i\e_i\right|^p \le \E|G|^p - C_p\sum_{i=1}^nb_i^4.
        \end{align*}
        By Proposition \ref{schurmon} we have $\E\left|\sum_{i=1}^nb_i\e_i\right|^p \ge \E|S|^p$, moreover $\frac 1 4 \le \sum_{i=1}^nb_i^4 \le \sum_{i=1}^na_i^4 \le 1$ implies $\sum_{i=1}^na_i^4 \le 4\sum_{i=1}^nb_i^4$. Hence
        \begin{align*}
            \E|S|^p \le \E|G|^p - \frac{C_p}{4}\sum_{i=1}^na_i^4.
        \end{align*}
    \end{proof}

    \section{Diagonal stability}

    Let us formalize the idea of nearing pairs of coefficients mentioned in the introduction. Given a vector $\left(a_1^2, \ldots, a_n^2\right)$, we perform a sequence of $T$-transformations transforming it into the vector $b^{(n)}$, we bound from below the deficit on each of these $T$-transformations and we sum up these bounds to get the statement. The following lemma provides a bound for a single $T$-transformation.

    \begin{lem}\label{lemdiag}
        Let $n \ge 3$, $a_1 \ge \ldots \ge a_n \ge 0$ be such that $\sum_{i=1}^na_i^2 = 1$, $S = \sum_{i=2}^{n-1}a_i\e_i$, $a = \sqrt{a_1^2 + a_n^2}$, $\mu_0 = \frac{a_n^2}{a^2}$, $\mu = \frac{1}{na^2}$. Then
        \begin{align*}
            \E\left|\sqrt{1-\mu}a\e_1 + \sqrt{\mu}a\e_n + S\right|^p - \E\left|\sqrt{1-\mu_0}a\e_1 + \sqrt{\mu_0}a\e_n + S\right|^p \ge 2C_p\left(a_1^2 - \frac 1 n\right)\left(\frac 1 n - a_n^2\right),
        \end{align*}
        where $C_p$ is the same constant as in Theorem \ref{diag}. For $p > 4$ we additionally assume that $a_1^2 \le 0.9 - \frac 1 n$.
    \end{lem}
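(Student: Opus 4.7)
The plan is to condition on $(\e_2, \ldots, \e_{n-1})$ and integrate out $\e_1, \e_n$. For any $\alpha, \beta \ge 0$ and $s \in \R$ the identity
\[
\E_{\e_1, \e_n}\left|\alpha \e_1 + \beta \e_n + s\right|^p = \tfrac{1}{4}\bigl[\psi_s((\alpha + \beta)^2) + \psi_s((\alpha - \beta)^2)\bigr]
\]
follows from writing out the four equiprobable sign patterns and using $\psi_s = \psi_{-s}$. Applied with $(\alpha, \beta) = (\sqrt{1-\mu}\,a, \sqrt{\mu}\,a)$ and using $(\alpha \pm \beta)^2 = a^2(1 \pm 2\xi)$ with $\xi := \sqrt{\mu(1-\mu)}$, and analogously for $\mu_0$ with $\xi_0 := \sqrt{\mu_0(1-\mu_0)}$, the left-hand side of the lemma becomes $G(\xi) - G(\xi_0)$, where
\[
G(\xi) = \tfrac{1}{4}\E_S\bigl[\psi_S(a^2(1+2\xi)) + \psi_S(a^2(1-2\xi))\bigr], \qquad \xi \in [0, 1/2].
\]

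Next I would observe that $G$ extends to a smooth even function on $(-1/2, 1/2)$ and is convex, since $\psi_s'' \ge 0$ (Lemma \ref{formula}). Evenness forces $G'(0) = 0$, and if $G''(s) \ge K$ throughout $[0, 1/2]$, then $G'(t) = \int_0^t G''(u)\,du \ge Kt$, so
\[
G(\xi) - G(\xi_0) = \int_{\xi_0}^\xi G'(t)\, dt \ge \tfrac{K}{2}(\xi^2 - \xi_0^2).
\]
A direct algebraic identity gives $\xi^2 - \xi_0^2 = (\mu - \mu_0)(1 - \mu - \mu_0) = (1/n - a_n^2)(a_1^2 - 1/n)/a^4$, so the lemma reduces to establishing
\[
G''(s) = a^4\,\E_S\bigl[\psi_S''(a^2(1+2s)) + \psi_S''(a^2(1-2s))\bigr] \ge 4 C_p\, a^4
\]
uniformly for $s \in [0, 1/2]$, or equivalently a positive lower bound on $\E\psi_S''(t)$ for $t$ varying in $(0, 2a^2]$.

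This lower bound splits into three subcases. For $p > 4$, Lemma \ref{p4bound} gives $\psi_S''(t) \ge c_p|S|^{p-4}$, and the auxiliary assumption $a_1^2 \le 0.9 - 1/n$ combined with $a_n^2 \le 1/n$ forces $\E S^2 = 1 - a_1^2 - a_n^2 \ge 0.1$; then $\E|S|^{p-4}$ is bounded below by a constant depending only on $p$ exactly as in the proof of Lemma \ref{lemgauss} (Jensen's inequality for $p \ge 6$, optimal Khintchine for $p \in (4, 6)$). For $p = 4$, Corollary \ref{ezbound} yields $\psi_S''(t) \ge 4$ directly. For $3 < p < 4$, Lemma \ref{p3bound} gives $\psi_S''(t) \ge c_p(|S| + \sqrt{t})^{p-4}$ with $p - 4 < 0$; restricting to $\{|S| \le 2\}$ (probability $\ge 1 - 2e^{-1}$ by Proposition \ref{chernoff}) and using $\sqrt{t} \le \sqrt{2a^2} \le \sqrt{2}$ yields $(|S| + \sqrt{t})^{p-4} \ge (2+\sqrt{2})^{p-4}$ on this event, hence the required constant lower bound.

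The main obstacle is the case $3 < p < 4$, where the lower bound on $\psi_s''(t)$ is a decreasing function of $|s|$ and concentration of $S$ near $0$ must be invoked; the case $p > 4$ is also delicate because the bound on $\psi_s''$ involves $|s|^{p-4}$, which forces one to control $\E|S|^{p-4}$ from below and explains the auxiliary hypothesis $a_1^2 \le 0.9 - 1/n$ (needed to keep $\E S^2$ bounded away from zero).
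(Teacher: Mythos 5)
Your proposal is correct and follows essentially the same route as the paper: conditioning on $S$, rewriting the two sides through $\psi_S$ evaluated at $a^2(1\pm 2\sqrt{\mu(1-\mu)})$, reducing to a uniform positive lower bound on $\E\psi_S''$ near $a^2$, and then the identical three-case analysis (Lemma \ref{p4bound} plus a lower bound on $\E|S|^{p-4}$ for $p>4$, Corollary \ref{ezbound} for $p=4$, Lemma \ref{p3bound} with Proposition \ref{chernoff} for $3<p<4$). Your packaging via $G''$, evenness and $G'(0)=0$ is just an equivalent form of the paper's double-integral formula for $\frac{d}{dt}\psi_{S,a}(t)$, so only the bookkeeping of constants differs.
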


    \begin{proof}
        Let $\e$ be a Rademacher variable independent of $(\e_1, \ldots, \e_n)$. For $s, x \in \R$ and $t \in [0, 1]$ define $\psi_{s, x}(t) = \E\psi_s\left(x^2(1 + t\e)\right) = \frac 1 2\left(\psi_s\left(x^2(1 + t)\right) + \psi_s\left(x^2(1 - t)\right)\right)$. We have
        \begin{align*}
            \E\left|\sqrt{1-\mu}a\e_1 + \sqrt{\mu}a\e_n + S\right|^p &= \E\left|\e\left|\sqrt{1-\mu}a\e_1 + \sqrt{\mu}a\e_n\right| + S\right|^p \\ &= \E\left|\e\sqrt{a^2 + 2a^2\sqrt{\mu(1 - \mu)}\e_1\e_n} + S\right|^p \\ &= \frac 1 2 \E\psi_S\left(a^2\left(1 + 2\sqrt{\mu(1 - \mu)}\e_1\e_n\right)\right) \\ &= \frac 1 2 \E\psi_{S, a}\left(2\sqrt{\mu(1 - \mu)}\right),
        \end{align*}
        the same for $\mu_0$ in place of $\mu$. Thus it suffices to prove that
        \begin{align}\label{form1}
            \E\left[\psi_{S, a}\left(2\sqrt{\mu(1 - \mu)}\right) - \psi_{S, a}\left(2\sqrt{\mu_0(1 - \mu_0)}\right)\right] \ge 2C_p\left(a_1^2 - \frac 1 n\right)\left(\frac 1 n - a_n^2\right).
        \end{align}
        Note that since $a_1^2 \ge \frac 1 n \ge a_n^2$, we have $\mu_0 \le \mu \le 1 - \mu_0$ and $\mu_0 \le 1 - \mu \le 1 - \mu_0$, hence $\mu(1 - \mu) \ge \mu_0(1 - \mu_0)$ with equality only for $a_1 = a_n = \frac{1}{\sqrt{n}}$, in which case the statement is trivial. We have
        \begin{align*}
            \frac{d}{dt}\psi_{s, x}(t) &= \frac{x^2}{2}\left(\psi_s'\left(x^2(1 + t)\right) - \psi_s'\left(x^2(1 - t)\right)\right) = \frac{x^2}{2}\int_{x^2(1 - t)}^{x^2(1 + t)}\psi_s''(v)dv \\ &= \frac{x^4}{2}\int_{1 - t}^{1 + t}\psi_s''(x^2u)du,
        \end{align*}
        hence \eqref{form1} is equivalent to
        \begin{align*}
            a^4\int_{2\sqrt{\mu_0(1 - \mu_0)}}^{2\sqrt{\mu(1 - \mu)}}\int_{1 - t}^{1 + t}\E\psi_S''(a^2u)dudt \ge 4C_p\left(a_1^2 - \frac 1 n\right)\left(\frac 1 n - a_n^2\right).
        \end{align*}
        Now we split the proof into three cases.\\

        \textit{Case $p > 4$:} Denote $c_p = \frac{3p(p-1)(p-2)(p-3)}{64}$. By Lemma \ref{p4bound} we get
        \begin{align*}
            a^4\int_{2\sqrt{\mu_0(1 - \mu_0)}}^{2\sqrt{\mu(1 - \mu)}}\int_{1 - t}^{1 + t}\E\psi_S''(a^2u)dudt &\ge c_pa^4\int_{2\sqrt{\mu_0(1 - \mu_0)}}^{2\sqrt{\mu(1 - \mu)}}\int_{1 - t}^{1 + t}\E|S|^{p-4}dudt \\ &= c_p\E|S|^{p-4}a^4\int_{2\sqrt{\mu_0(1 - \mu_0)}}^{2\sqrt{\mu(1 - \mu)}}2tdt \\ &= 4c_p\E|S|^{p-4}a^4(\mu(1 - \mu) - \mu_0(1 - \mu_0)) \\ &= 4c_p\E|S|^{p-4}a^2(\mu - \mu_0)a^2(1 - \mu - \mu_0) \\ &= 4c_p\E|S|^{p-4}\left(\frac 1 n - a_n^2\right)\left(a_1^2 - \frac 1 n\right).
        \end{align*}
        Similarly to the proof of Lemma \ref{lemgauss}, the quantity $\E|S|^{p-4}$ is bounded from below by a constant depending only on $p$, if only $a^2 \le c$ for some $c < 1$, which is guaranteed by $a_1^2 \le 0.9 - \frac 1 n$ and $a_n^2 \le \frac 1 n$. It yields the statement.\\

        \textit{Baby case $p = 4$:} Again we consider this case separately to avoid the term $0^0$. Analogously to the previous case, using Corollary \ref{ezbound} instead of Lemma \ref{p4bound}, we get
        \begin{align*}
            a^4\int_{2\sqrt{\mu_0(1 - \mu_0)}}^{2\sqrt{\mu(1 - \mu)}}\int_{1 - t}^{1 + t}\E\psi_S''(a^2u)dudt &\ge 16\left(\frac 1 n - a_n^2\right)\left(a_1^2 - \frac 1 n\right).
        \end{align*}

        \textit{Case $3 < p < 4$:} Denote $c_p = \frac{p(p-1)(p-2)(p-3)}{6}$. By Lemma \ref{p3bound}, Proposition \ref{chernoff}, $a \le 1$ and $2\mu(1 - \mu) \le \frac 1 2$ we get
        \begin{align*}
            a^4\int_{2\sqrt{\mu_0(1 - \mu_0)}}^{2\sqrt{\mu(1 - \mu)}}\int_{1 - t}^{1 + t}\E\psi_S''(a^2u)dudt &\ge c_pa^4\int_{2\sqrt{\mu_0(1 - \mu_0)}}^{2\sqrt{\mu(1 - \mu)}}\int_{1 - t}^{1 + t}\E\left(|S| + a\sqrt{u}\right)^{p-4}dudt \\ &\ge c_pa^4\int_{2\sqrt{\mu_0(1 - \mu_0)}}^{2\sqrt{\mu(1 - \mu)}}\int_{1 - t}^{1 + t}\Prb(|S| \le 2)\left(2 + a\sqrt{u}\right)^{p-4}dudt \\ &\ge c_p\left(1 - 2e^{-1}\right)\left(2 + \sqrt{2}\right)^{p-4}a^4\int_{2\sqrt{\mu_0(1 - \mu_0)}}^{2\sqrt{\mu(1 - \mu)}}\int_{1 - t}^{1 + t}1dudt \\ &= 4c_p\left(1 - 2e^{-1}\right)\left(2 + \sqrt{2}\right)^{p-4}\left(\frac 1 n - a_n^2\right)\left(a_1^2 - \frac 1 n\right).
        \end{align*}
    \end{proof}

    We are now ready to prove Theorem \ref{diag}.

    \begin{proof}[Proof of Theorem \ref{diag}]
        Suppose that $n \ge 3$ and $a_1^2 \le 0.9 - \frac 1 n$. Denote $b_i = a_i^2$ for $i = 1, \ldots, n$. If $b_1 > \frac 1 n > b_n$, we perform the $T$-transformation $(b_1, \ldots, b_n) \mapsto \left(b_1 + b_n - \frac 1 n, b_2, \ldots, b_{n-1}, \frac 1 n\right)$, we apply the permutation of the coordinates of the new vector to obtain the vector $(b_1', \ldots, b_n')$ with $b_1' \ge \ldots \ge b_n'$ and we define $a_i' = \sqrt{b_i'}$. Then we repeat the whole procedure with $(a_1', \ldots, a_n')$ in place of $(a_1, \ldots, a_n)$ until $(a_1, \ldots, a_n) = a^{(n)}$. It is clear that the above procedure terminates in a finite number of steps since each step strictly increases the number of coefficients equal to $\frac{1}{\sqrt{n}}$. As $a_1$ does not increase and $a_n < \frac{1}{\sqrt{n}}$, Lemma \ref{lemdiag} gives us
        \begin{align}\label{singlet}
            \E\left|\sum_{i=1}^na_i'\e_i\right|^p - \E\left|\sum_{i=1}^na_i\e_i\right|^p \ge 2C_p\left(a_1^2 - \frac 1 n\right)\left(\frac 1 n - a_n^2\right).
        \end{align}
        Note that in the notation of Lemma \ref{lemdiag} we have $\sqrt{\mu_0}a = a_n$ and $\sqrt{1 - \mu_0}a = a_1$.

        Now observe that for any real numbers $x$, $y$, $z$ we have
        \begin{align}\label{id1}
            2(x - y)(y - z) = 2\left(xy + yz - xz - y^2\right) = x^2 + z^2 - y^2 - (x - y + z)^2.
        \end{align}
        Applying this identity with $x = a_1^2$, $y = \frac 1 n$, $z = a_n^2$ to \eqref{singlet} gives us
        \begin{align*}
            \E\left|\sum_{i=1}^na_i'\e_i\right|^p - \E\left|\sum_{i=1}^na_i\e_i\right|^p &\ge C_p\left(a_1^4 + a_n^4 - \frac{1}{n^2} - \left(a_1^2 + a_n^2 - \frac 1 n\right)^2\right) \\ &= C_p\left(\sum_{i=1}^na_i^4 - \sum_{i=1}^na_i'^4\right),
        \end{align*}
        where we use the fact that $(a_1', \ldots, a_n')$ is a permutation of the numbers $\frac{1}{\sqrt{n}}, \sqrt{a_1^2 + a_n^2 - \frac 1 n}, a_2,$ $\ldots, a_{n-1}$. Summing it for all the performed $T$-transformations together with the identity
        \begin{align}\label{id2}
            \sum_{i=1}^na_i^4 - \frac 1 n = \sum_{i=1}^na_i^4 - \frac 2 n \sum_{i=1}^na_i^2 + \sum_{i=1}^n\frac{1}{n^2} =  \sum_{i=1}^n\left(a_i^2 - \frac 1 n\right)^2
        \end{align}
        yields the statement.

        Suppose that $n \ge 3$ and $a_1^2 > 0.9 - \frac 1 n$. Then we deal with the large coefficient in the same way as in the proof of Theorem \ref{gauss}. We perform the $T$-transformations nearing $a_1$ to the smaller coefficients, use Schur monotonicity and compare $\left(0.9 - \frac 2 n\right)^2$ with $\sum_{i=1}^n\left(a_i^2 - \frac 1 n\right)^2$ to get the statement at the cost of dividing $C_p$ by at least $\left(0.9 - \frac 2 3\right)^{-2} < 25$.

        Now consider the remaining case $n = 2$. Then, after substituting $x = a_1^2 - \frac 1 2$, the statement takes the form
        \begin{align}\label{n2}
            \left|\sqrt{\frac 1 2 + x} + \sqrt{\frac 1 2 - x}\right|^p + \left|\sqrt{\frac 1 2 + x} - \sqrt{\frac 1 2 - x}\right|^p \le 2^{\frac p 2} - C_px^2
        \end{align}
        for $0 \le x \le \frac 1 2$. Using the Taylor expansion of $y \mapsto \sqrt{1 + y}$ we get
        \begin{align*}
            \left|\sqrt{\frac 1 2 + x} + \sqrt{\frac 1 2 - x}\right|^p + \left|\sqrt{\frac 1 2 + x} - \sqrt{\frac 1 2 - x}\right|^p &= \left|\sqrt{2} - \frac{x^2}{\sqrt{2}} + o(x^2)\right|^p + \left|x\sqrt{2} + o(x^2)\right|^p \\ &= 2^{\frac p 2} - 2^{\frac p 2 - 1}px^2 + o(x^2),
        \end{align*}
        which implies that 
        \begin{align*}
            \left|\sqrt{\frac 1 2 + x} + \sqrt{\frac 1 2 - x}\right|^p + \left|\sqrt{\frac 1 2 + x} - \sqrt{\frac 1 2 - x}\right|^p \le 2^{\frac p 2} - C'_px^2
        \end{align*}
        for $x \in [0, \delta]$ for some $\delta > 0$. If $\delta < x \le \frac 1 2$, then due to the monotonicity of LHS we have 
        \begin{align*}
            \left|\sqrt{\frac 1 2 + x} + \sqrt{\frac 1 2 - x}\right|^p + \left|\sqrt{\frac 1 2 + x} - \sqrt{\frac 1 2 - x}\right|^p \le 2^{\frac p 2} - C'_p\delta^2 \le 2^{\frac p 2} - 4C'_p\delta^2x^2,
        \end{align*}
        which proves \eqref{n2}.
    \end{proof}

    Heading towards the proof of Theorem \ref{crit} we follow the same strategy as for Theorem \ref{diag}. However, it results in a worse stability estimate due to the fact that for $p = 3$ the function $\psi_s''(t)$ is 0 for $t > s^2$. We shall comment on it in the proof of the following lemma, analogous to Lemma \ref{lemdiag}.

    \begin{lem}\label{lemcrit}
        Let $a_1 \ge \ldots \ge a_n \ge 0$ be such that $\sum_{i=1}^na_i^2 = 1$, $S = \sum_{i=2}^{n-1}a_i\e_i$, $a = \sqrt{a_1^2 + a_n^2}$, $\mu_0 = \frac{a_n^2}{a^2}$, $\mu = \frac{1}{na^2}$. Then
        \begin{align*}
            &\E\left|\sqrt{1-\mu}a\e_1 + \sqrt{\mu}a\e_n + S\right|^3 - \E\left|\sqrt{1-\mu_0}a\e_1 + \sqrt{\mu_0}a\e_n + S\right|^3 \\ &\ge C_3\frac{\sqrt{\frac 1 n \left(a^2 - \frac 1 n\right)}}{a^2}\left(a_1^2 - \frac 1 n\right)\left(\frac 1 n - a_n^2\right),
        \end{align*}
        where $C_3$ is the same constant as in Theorem \ref{crit}.
    \end{lem}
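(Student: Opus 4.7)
My plan is to follow the same blueprint as in Lemma \ref{lemdiag}, but use the explicit formula $\psi_S''(a^2 u) = \frac{3(a^2 u - S^2)_+}{2\,a^3\,u^{3/2}}$ from \eqref{formula2} in place of the convexity bounds used for $p > 3$. Writing $t_0 = 2\sqrt{\mu_0(1-\mu_0)}$ and $t_1 = 2\sqrt{\mu(1-\mu)}$, the same chain of equalities as in that proof gives
\[
\text{LHS} \;=\; \frac{a^4}{4}\int_{t_0}^{t_1}\!\int_{1-t}^{1+t}\E\psi_S''(a^2 u)\,du\,dt,
\]
while the target deficit rewrites as $C_3\,a^4\,t_1(t_1^2-t_0^2)/8$, via the identities $\sqrt{(1/n)(a^2-1/n)} = a^2\,t_1/2$ and $(a_1^2-1/n)(1/n-a_n^2) = a^4(t_1^2-t_0^2)/4$. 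Note also that $\mu_0 \le \mu \le 1-\mu_0$ (from $a_n^2 \le 1/n \le a_1^2$) forces $t_0 \le t_1$, since $x\mapsto x(1-x)$ is symmetric about $1/2$, and also $\mu_0 \le 1/2$; both facts will be used below.

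The pointwise lower bound on $\E\psi_S''(a^2 u)$ comes from Proposition \ref{BE}, applied with threshold $a_1$ to $S = \sum_{i=2}^{n-1} a_i\e_i$: the hypotheses $a_1 \ge a_2$, $a_1 \le 1$, and $\sum_{i=2}^{n-1} a_i^2 = 1 - a^2 \le 1$ all hold, so $\Prb(|S|\le a_1) \ge 3a_1/16$. Therefore $\E(a^2 u - S^2)_+ \ge (a^2 u - a_1^2)_+\cdot 3a_1/16$, and substituting $a_1^2 = a^2(1-\mu_0)$ yields
\[
\E\psi_S''(a^2 u) \;\ge\; \frac{9\,a_1}{32\,a}\cdot\frac{(u-1+\mu_0)_+}{u^{3/2}}.
\]

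The key step is a Fubini-type calculation of the resulting double integral. Its integrand is positive exactly on $u \in (1-\mu_0,\,1+t_1) \subseteq [1/2,\,2]$, where $u^{-3/2} \ge 1/(2\sqrt{2})$. Switching the order of integration gives $\int\frac{(u-1+\mu_0)_+}{u^{3/2}}\bigl(t_1 - \max(t_0,|u-1|)\bigr)_+\,du$. Using $t_0 \ge \mu_0$ (which holds since $\mu_0 \le 1/2 < 4/5$), one splits at $|u-1| = t_0$; an elementary computation on each piece, together with the algebraic identity $A^3 - B^3 = (A-B)(A^2+AB+B^2)$ applied at $A = t_1+\mu_0$, $B = t_0+\mu_0$, yields
\[
\int_{t_0}^{t_1}\!\int_{1-t}^{1+t}\frac{(u-1+\mu_0)_+}{u^{3/2}}\,du\,dt \;\ge\; \frac{(t_1+\mu_0)^3 - (t_0+\mu_0)^3}{12\sqrt{2}}.
\]
The three bounds $(t_1+\mu_0)^3 - (t_0+\mu_0)^3 \ge (t_1-t_0)t_1^2$, $t_1^2 - t_0^2 \le 2t_1(t_1-t_0)$ (from $t_0 \le t_1$), and $a_1/a = \sqrt{1-\mu_0} \ge 1/\sqrt{2}$ (from $\mu_0 \le 1/2$) combine to give a universal constant; one may take, for instance, $C_3 = 3/256$. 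I expect the main obstacle to be precisely this Fubini bookkeeping producing the clean form $[(t_1+\mu_0)^3 - (t_0+\mu_0)^3]/6$: weaker substitutes (for example, bounding $\E\psi_S''(a^2 u)$ by a positive constant on a sub-region on which Proposition \ref{BE} applies) lose the factor $\sqrt{(1/n)(a^2-1/n)} = a^2 t_1/2$ and so cannot reproduce the sharp form of the deficit in the statement.
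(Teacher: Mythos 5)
Your proposal is correct and follows essentially the same route as the paper: the reduction to $\frac{a^4}{4}\int_{t_0}^{t_1}\int_{1-t}^{1+t}\E\psi_S''(a^2u)\,du\,dt$, the explicit formula \eqref{formula2} for $p=3$, and the concentration bound of Proposition \ref{BE} are exactly the paper's ingredients. The differences are only in the final bookkeeping — you apply Proposition \ref{BE} at threshold $a_1$ and evaluate the double integral exactly by Fubini as $\frac{(t_1+\mu_0)^3-(t_0+\mu_0)^3}{6}$ before using the cube factorization, whereas the paper uses threshold $a$, integrates directly to get $\frac{t_1^3-t_0^3}{6}$ and then invokes concavity of $\sqrt{x}$ — and your constant $3/256$ is of the same order as the paper's $2^{-6}$.
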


    \begin{proof}
        Following the proof of Lemma \ref{lemdiag} we arrive with reduction of the statement to
        \begin{align}\label{form3}
            a^4\int_{2\sqrt{\mu_0(1 - \mu_0)}}^{2\sqrt{\mu(1 - \mu)}}\int_{1 - t}^{1 + t}\E\psi_S''(a^2u)dudt \ge 4C_3\frac{\sqrt{\frac 1 n \left(a^2 - \frac 1 n\right)}}{a^2}\left(a_1^2 - \frac 1 n\right)\left(\frac 1 n - a_n^2\right).
        \end{align}
        Now, by formula \eqref{formula2} we get
        \begin{align*}
            a^4\int_{2\sqrt{\mu_0(1 - \mu_0)}}^{2\sqrt{\mu(1 - \mu)}}\int_{1 - t}^{1 + t}\E\psi_S''(a^2u)dudt &= \frac 3 2 a^4\int_{2\sqrt{\mu_0(1 - \mu_0)}}^{2\sqrt{\mu(1 - \mu)}}\int_{1 - t}^{1 + t}\frac{\E\left(a^2u - S^2\right)_+}{a^3u^{\frac 3 2}}dudt.
        \end{align*}
        By Proposition \ref{BE} we have
        \begin{align}\label{conc}
            \E\left(a^2u - S^2\right)_+ \ge \Prb(|S| \le a)a^2(u - 1)_+ \ge \frac{3}{16}a^3(u - 1)_+,
        \end{align}
        which together with $u^{\frac 3 2} \le \left(\frac 3 2\right)^{\frac 3 2} \le 2$ gives
        \begin{align}\label{mid}
            \frac 3 2 a^4\int_{2\sqrt{\mu_0(1 - \mu_0)}}^{2\sqrt{\mu(1 - \mu)}}\int_{1 - t}^{1 + t}\frac{\E\left(a^2u - S^2\right)_+}{a^3u^{\frac 3 2}}dudt &\ge \frac{9}{64}a^4\int_{2\sqrt{\mu_0(1 - \mu_0)}}^{2\sqrt{\mu(1 - \mu)}}\int_1^{1 + t}(u - 1)dudt \\ &= \frac{9}{64}a^4\int_{2\sqrt{\mu_0(1 - \mu_0)}}^{2\sqrt{\mu(1 - \mu)}}\frac{t^2}{2}dt \notag \\ &= \frac{3}{16}a^4\left((\mu(1 - \mu))^{\frac 3 2} - (\mu_0(1 - \mu_0))^{\frac 3 2}\right). \notag
        \end{align}
        As we see, in \eqref{conc} we get a lower bound not by universal constant, but by an expression linearly small for $u$ close to 1, and by Remark \ref{BEopt} it cannot be significantly improved by this method. After double integration it results with the term of order $(\mu(1 - \mu))^{\frac 3 2} - (\mu_0(1 - \mu_0))^{\frac 3 2}$ instead of $\mu(1 - \mu) - \mu_0(1 - \mu_0)$, which perfectly simplifies in the proof of Lemma \ref{diag}. Note that for any $0 \le b_0 < b_1$ and non-negative function $f$ concave on $[b_0, b_1]$ we have 
        \begin{align}\label{concave}
            \int_{b_0}^{b_1}f(x)dx &= (b_1 - b_0)\int_0^1f((1-\lambda)b_0 + \lambda b_1)d\lambda \ge (b_1 - b_0)\int_0^1((1-\lambda)f(b_0) + \lambda f(b_1))d\lambda \notag \\ &= \frac{(b_1 - b_0)(f(b_1) + f(b_0))}{2} \ge \frac{(b_1 - b_0)f(b_1)}{2}.
        \end{align}
        Continuing \eqref{mid} and using concavity of $x \mapsto \sqrt{x}$ together with \eqref{concave} we have
        \begin{align*}
            \frac{3}{16}a^4\left((\mu(1 - \mu))^{\frac 3 2} - (\mu_0(1 - \mu_0))^{\frac 3 2}\right) &= \frac{a^4}{8}\int_{\mu_0(1 - \mu_0)}^{\mu(1 - \mu)}\sqrt{x}dx \\ &\ge \frac{a^4}{16}(\mu(1 - \mu) - \mu_0(1 - \mu_0))\sqrt{\mu(1 - \mu)} \\ &= \frac{\sqrt{\frac 1 n \left(a^2 - \frac 1 n\right)}}{16a^2}\left(a_1^2 - \frac 1 n\right)\left(\frac 1 n - a_n^2\right).
        \end{align*}
        It proves \eqref{form3} with $C_3 = 2^{-6}$.
    \end{proof}

    Now we are ready to prove Theorem \ref{crit}.

    \begin{proof}[Proof of Theorem \ref{crit}]
        We begin analogously to the proof of Theorem \ref{diag}, using Lemma \ref{lemcrit} instead of Lemma \ref{lemdiag} and thus having
        \begin{align}\label{singlet2}
            \E\left|\sum_{i=1}^na_i'\e_i\right|^3 - \E\left|\sum_{i=1}^na_i\e_i\right|^3 \ge C_3\frac{\sqrt{\frac 1 n\left(a^2 - \frac 1 n\right)}}{a^2}\left(a_1^2 - \frac 1 n\right)\left(\frac 1 n - a_n^2\right).
        \end{align}
        instead of \eqref{singlet}. 
        
        In general the sum of \eqref{singlet2} over all the performed $T$-transformations, on the contrary to the proof of Theorem \ref{diag}, seems to be difficult to represent with a nice formula. We shall bound this sum from below by something more compact and still being a distance between $(a_1, \ldots, a_n)$ and $a^{(n)}$ in some sense. Look at the first $k$ of the performed $T$-transformations, where $k = \max\left\{i : a_i^2 > \frac 1 n > a_{n+1-i}^2\right\}$. As until the $j$-th $T$-transformation at most $j-1$ largest and $j-1$ smallest coefficient could be affected, in the $j$-th step the largest coefficient is no smaller than $a_j$ and the smallest is no larger than $a_{n-j+1}$. Hence at the $j$-th step we add at least $C_3\frac{\sqrt{\frac 1 n\left(a^2 - \frac 1 n\right)}}{a^2}\left(a_j^2 - \frac 1 n\right)\left(\frac 1 n - a_{n-j+1}^2\right)$ to the deficit.
        
        What remains to do is to bound the term $\frac{\sqrt{\frac 1 n\left(a^2 - \frac 1 n\right)}}{a^2}$ in \eqref{singlet2} for the first $k$ of the performed $T$-transformations. As the largest coefficient is bounded from above by $a_1$ and the smallest is bounded from above by $\frac 1 n$, we have $a^2 \le a_1^2 + \frac 1 n$ in the denominator. As the smallest coefficient is bounded from below by $a_n$ and the largest coefficient in the $j$-th $T$-transformation is bounded from below by $a_j$, we have $a^2 - \frac 1 n \ge a_j^2 + a_n^2 - \frac 1 n$ in the numerator of $j$-th $T$-transformation. Thus we can bound the deficit at the $j$-th step by $C_3\frac{\sqrt{\frac 1 n \left(a_j^2 + a_n^2 - \frac 1 n\right)}}{a_1^2 + \frac 1 n}\left(a_j^2 - \frac 1 n\right)_+\left(\frac 1 n - a_{n+1-j}^2\right)_+$. Summing it from $j = 1$ to $j = k$ ends the proof.
    \end{proof}

    \section{Final remarks and open questions}

    \subsection{Optimality of Theorem \ref{gauss}}

    According to the remark after Theorem \ref{gauss}, it is impossible to decrease the exponent 4 in the statement of this theorem. It is an immediate consequence of the following proposition and taking $n \to \infty$ with $a_i = \frac{1}{\sqrt{n}}$ for all $i$.

    \begin{prop}\label{opt}
        Let $p \ge 3$. There exists a universal constant $C(p)$ such that
        \begin{align*}
            \E|S_n|^p \ge \E|G|^p - \frac{C(p)}{n}.
        \end{align*}
    \end{prop}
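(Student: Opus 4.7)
The strategy is a Lindeberg-style swapping argument analogous to the proof of Theorem \ref{gauss}, but producing \emph{upper} bounds on the single-swap contribution rather than lower bounds. Introduce i.i.d.\ standard Gaussians $G_1, \ldots, G_n$ independent of $\e_1, \ldots, \e_n$ and set $T_i = \frac{1}{\sqrt n}\sum_{j=1}^i G_j + \frac{1}{\sqrt n}\sum_{j=i+1}^n \e_j$, so that $T_0 = S_n$, $T_n \stackrel{d}{=} G$, and writing $S_i' := T_{i-1} - \e_i/\sqrt n$ (independent of both $G_i$ and $\e_i$) one has the telescoping identity
\[
\E|G|^p - \E|S_n|^p = \sum_{i=1}^n \Delta_i, \qquad \Delta_i = \E|S_i' + G_i/\sqrt n|^p - \E|S_i' + \e_i/\sqrt n|^p \ge 0,
\]
where nonnegativity of each $\Delta_i$ follows from Proposition \ref{schurmon}. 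It suffices to show $\sum_{i=1}^n \Delta_i \le C(p)/n$.

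The key observation is that $G_i/\sqrt n$ and $\e_i/\sqrt n$ have identical first three moments $(0, 1/n, 0)$, so conditionally on $S_i'$ their effect on $\E|S_i' + \cdot|^p$ differs only at fourth order. As in the proof of Lemma \ref{lemgauss}, $\Delta_i = \tfrac12 \E\bigl[\psi_{S_i'}(G_i^2/n) - \psi_{S_i'}(1/n)\bigr]$, and Taylor expansion of $\psi_s$ at $t = 1/n$ (the first-order term vanishes since $\E G_i^2 = 1$) yields
\[
\Delta_i = \frac{1}{2}\,\E\!\int_{1/n}^{G_i^2/n} (G_i^2/n - u)\,\psi''_{S_i'}(u)\,du.
\]
For $p \ge 4$, crude upper estimation of the triple integral in Lemma \ref{formula} gives $\psi_s''(u) \le C_p(|s| + \sqrt u)^{p-4}$; inserting this together with the Khintchine bound $\E|S_i'|^{p-4} \le C_p$ (coming from $\E|S_i'|^2 \le 1$ via Jensen for $p \le 6$ and Khintchine for $p > 6$) and $\E[(G_i^2-1)^2] = 2$ produces $\Delta_i \le C(p)/n^2$ uniformly in $i$, and summing gives the desired $C(p)/n$. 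For $3 < p < 4$ the analogous bound $\psi_s''(u) \le C_p u^{(p-4)/2}$, combined with splitting on whether $|S_i'|$ is smaller or larger than $1/\sqrt n$ and using the density bounds of Corollary \ref{Gaussmod} on the Gaussian-modified Rademacher sum $S_i'$, again produces a summable estimate.

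The main obstacle is the critical case $p = 3$, where the direct bound $\psi_s''(u) \le 3/(2\sqrt u)$ from \eqref{formula2} only yields $\Delta_i \le C/n^{3/2}$, which is not summable to $C/n$. To overcome this, I would exploit the exact identity
\[
|y+x|^3 + |y-x|^3 - 2|y|^3 - 6|y|\,x^2 = 2(|x| - |y|)_+^3, \qquad x, y \in \mathbb{R},
\]
(verified by case analysis on the signs of $y \pm x$) to obtain the closed form $\Delta_i = \E[(|G_i|/\sqrt n - |S_i'|)_+^3] - (1/\sqrt n - |S_i'|)_+^3 = n^{-3/2}\,\tilde\phi(|S_i'|\sqrt n)$, where $\tilde\phi(a) = \E(|G|-a)_+^3 - (1-a)_+^3$ is a fixed, non-negative, bounded, integrable function with $\int_0^\infty \tilde\phi(a)\,da = 1/2$. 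The remaining task is to show $\E\tilde\phi(|S_i'|\sqrt n) \le C/\sqrt{i}$, so that $\sum_i \E\Delta_i \le Cn^{-3/2}\sum_{i=1}^n i^{-1/2} \le C/n$. For $i \ge 2$ this follows from the density bound $\|\text{density of } S_i'\|_\infty \le \sqrt{n/(2\pi(i-1))}$ (from the Gaussian component of $S_i'$), and for the pure-Rademacher case $i = 1$ from the local CLT estimate $\Prb(\sum_{j=2}^n \e_j = k) \le C/\sqrt n$ (a direct Stirling computation). Combining all regimes yields the proposition.
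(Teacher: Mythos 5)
Your proposal is correct in spirit but proves the proposition by a genuinely different route than the paper. The paper's proof is a doubling argument: pairing consecutive signs, writing $\e_{2i-1}+\e_{2i}\stackrel{d}{=}2X_i\e_i$ with $X_i$ symmetric Bernoulli, conditioning on $X=\sum_iX_i$, invoking Schur monotonicity (Proposition \ref{schurmon}) to bound $\E|S_k|^p\le\E|S_n|^p$ for $k\le n$, and then a sharp binomial moment bound $\E X^{p/2}\le(n/2)^{p/2}e^{p^2/(4n)}$ to get the claim $\E|S_{2n}|^p\le e^{p^2/(4n)}\E|S_n|^p$; iterating along $2^mn$ and letting $m\to\infty$ via the CLT yields the multiplicative estimate $\E|G|^p\le e^{2p/n}\E|S_n|^p$, hence the statement with the explicit constant $C(p)=2p\E|G|^p$. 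That argument treats all $p\ge3$ uniformly, needs no anti-concentration input, and is very short. Your Lindeberg swap runs the proof of Theorem \ref{gauss} ``in reverse'', upper-bounding each exchange; the $p\ge4$ case is fine (the crude bound $\psi_s''(u)\le\frac{p(p-1)(p-2)(p-3)}{6}(|s|+\sqrt u)^{p-4}$ and $\E|S_i'|^{p-4}\le C_p$ do give $\Delta_i\le C(p)/n^2$), and the $p=3$ computation is a nice self-contained alternative: the identity, the nonnegativity of $\tilde\phi$ (equivalent to Jensen for $\psi_s$), and $\int_0^\infty\tilde\phi=3/4-1/4=1/2$ all check out, and the density bound $\sqrt{n/(2\pi(i-1))}$ for $i\ge2$ plus a local CLT at $i=1$ indeed give $\sum_i\Delta_i\le Cn^{-3/2}\sum_i(i-1)^{-1/2}+Cn^{-2}\le C/n$. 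The price of your route is precisely this anti-concentration bookkeeping, which the paper avoids entirely.

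The one genuine soft spot is the case $3<p<4$. You cite ``density bounds of Corollary \ref{Gaussmod}'', but that corollary (like Propositions \ref{chernoff} and \ref{BE}) provides \emph{lower} bounds on $\Prb(|S|\le a)$ — the opposite direction of what is needed here; moreover the uniform bound $\psi_s''(u)\le\frac{p(p-2)}{2}u^{(p-4)/2}$ alone only gives $\sum_i\Delta_i\lesssim n^{1-p/2}$, which is worse than $1/n$. The fix is the same device as in your $p=3$ paragraph: for $i\ge2$ use the density bound $\sqrt{n/(2\pi(i-1))}$ coming from the Gaussian component of $S_i'$ (and a local CLT for $i=1$). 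Then on $\{|S_i'|\le(1+|G_i|)/\sqrt n\}$ the $u^{(p-4)/2}$ bound gives a contribution $\lesssim n^{-p/2}/\sqrt{i-1}$, summing to $O(n^{(1-p)/2})=O(1/n)$, while on the complement one has $\psi_{S_i'}''(u)\lesssim|S_i'|^{p-4}$ and a dyadic decomposition with the same density bound yields $\E\bigl[|S_i'|^{p-4}\1\bigr]\lesssim(n/(i-1))^{(4-p)/2}$, so that $n^{-2}\sum_{i=2}^n(n/(i-1))^{(4-p)/2}=O(1/n)$. So the plan closes, but as written this case cites the wrong tool and omits the bookkeeping. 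A minor further point: the nonnegativity of each $\Delta_i$ follows from Jensen's inequality applied to the convex function $\psi_{S_i'}$ (or by a limiting argument), not from Proposition \ref{schurmon}, which compares two Rademacher sums — though nonnegativity is not actually needed for the upper bound on the telescoping sum.
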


    \begin{proof}
        We begin with the following claim.
        
        \noindent\textit{Claim.} For any $n \in \N$ we have $\E|S_{2n}|^p \le e^{\frac{p^2}{4n}}\E|S_n|^p$.

        \begin{proof}[Proof of the claim]
            Let $\rho_i = \e_{2i-1} + \e_{2i}$ for $i = 1, 2, \ldots, n$. Then the variables $\rho_1, \ldots, \rho_n$ are i.i.d. and $\rho_i$ has the same distribution as $2X_i\e_i$, where $X_i$ is a symmetric Bernoulli variable, i.e. $\Prb(X_i = 0) = \Prb(X_i = 1) = \frac 1 2$, and $X_1, \ldots, X_n$ are i.i.d. and independent of $(\e_1, \ldots, \e_n)$. By using this representation, denoting $X = \sum_{i=1}^nX_i$ and conditioning on $X_i$ we get
            \begin{align}\label{daniel}
                \E|S_{2n}|^p &= \E\left|\frac{1}{\sqrt{2n}}\sum_{i=1}^n2X_i\e_i\right|^p = \left(\frac 2 n\right)^{\frac p 2}\E_X\E_{\e}\left|\sum_{i=1}^nX_i\e_i\right|^p = \left(\frac 2 n\right)^{\frac p 2}\sum_{k=0}^n\Prb(X = k)\left|\sum_{i=1}^k\e_i\right|^p \notag \\ &= \left(\frac 2 n\right)^{\frac p 2}\sum_{k=0}^n\Prb(X = k)k^{\frac p 2}\E|S_k|^p \le \left(\frac 2 n\right)^{\frac p 2}\E|S_n|^p\sum_{k=0}^n\Prb(X = k)k^{\frac p 2},
            \end{align}
            where the inequality follows from Proposition \ref{schurmon}. Now it follows from Corollary 1 in \cite{A} that
            \begin{align*}
                \sum_{k=0}^n\Prb(X = k)k^{\frac p 2} = \E|X|^{\frac p 2} \le \left(\frac n 2\right)^{\frac p 2}e^{\frac{p^2}{4n}}.
            \end{align*}
            Putting it into \eqref{daniel} yields the claim.
        \end{proof}
        By the claim for any non-negative integer $k$ we have
        \begin{align*}
            \E\left|S_{2^{k+1}n}\right|^p \le e^{\frac{p^2}{2^{k+2}n}}\E\left|S_{2^kn}\right|^p.
        \end{align*}
        Multiplying it for $k = 0, 1, \ldots, m-1$ for some positive integer $m$ gives us
        \begin{align*}
            \E\left|S_{2^mn}\right|^p \le e^{\left(1 - 2^{-m}\right)\frac{p}{2n}}\E|S_n|^p.
        \end{align*}
        By letting $m \to \infty$ we obtain
        \begin{align*}
            \E|G|^p \le e^{\frac{2p}{n}}\E|S_n|^p.
        \end{align*}
        It remains to observe that $e^{-\frac{2p}{n}} \ge 1 - \frac{2p}{n}$ to get the statement with $C(p) = 2p\E|G|^p$.
    \end{proof}

    \subsection{Estimates on constants $C_p$}

    In Theorem \ref{gauss} we get from the proof that \begin{itemize}
        \item $C_p = 2^{\frac{4-p}{2}} \cdot \frac{3p(p-1)(p-2)(p-3)}{128}$ for $p \ge 6$;
        \item $C_p = 2^{\frac{4-p}{2}} \cdot \frac{3p(p-1)(p-2)(p-3)}{128}\widetilde{C}_p$ for $4 < p < 6$;
        \item $C_4 = 2$;
        \item $C_p = \frac{p(p-1)(p-2)(p-3)\left(1 - 2e^{-1}\right)}{6\sqrt{2\pi}}\int_1^{\infty}\frac{t}{(t+1)\sqrt{t+1}}\left(2 + \sqrt{t}\right)^{p-4}e^{-\frac{t+1}{2}}dt$ for $3 < p < 4$;
        \item $C_3 = \frac{9}{32\sqrt{2\pi}}\int_1^{\infty}\frac{t}{(t+1)\sqrt{t+1}}\frac{t-1}{t\sqrt{t}}e^{-\frac{t+1}{2}}dt$.
    \end{itemize}
    Here $\widetilde{C}_p$ is optimal Khintchine constant between the second and $(p-4)$-th moment, $\widetilde{C}_p = \min\left\{2^{\frac{p-6}{2}}, \frac{2^{\frac{p-4}{2}}\Gamma\left(\frac{p-3}{2}\right)}{\sqrt{\pi}}\right\}$ by Haagerup \cite{H}. We ignore the factor $\frac 1 4$ coming from the vectors with $a_1 > \frac{1}{\sqrt{2}}$. In most cases we would like to bound these formulas from below by something much simpler. For $p \ge 6$ we have $C_p \ge \frac{3}{32}(p-3)^42^{-\frac p 2}$, we may also observe that $C_p \sim p^4 2^{-\frac p 2}$ asymptotically with $p \to \infty$, in particular $C_p \xrightarrow{p \to \infty} 0$. For $4 < p < 6$ we have $C_p \ge \frac{3}{128}(p-3)^4\min\left\{2^{-1}, \frac{\Gamma\left(\frac{p-3}{2}\right)}{\sqrt{\pi}}\right\}$, where the last minimum can be easily bounded away from zero by a universal constant. The constant $C_4$ is sharp since $\E|S|^4 = 3 - 2\sum_{i=1}^na_i^4$ and $\E|G|^4 = 3$. The integral in the case $3 < p < 4$ is decreasing in $p$ and for $p = 3$ we get a numerical approximation by $0.037$, which gives $C_p \ge 0.002(p-3)$. Note that $C_p$ vanishes with $p \to 3$. Similarly by numerical approximation of the integral we get $C_3 \ge 10^{-3}$. We could expect that by continuity the optimal values of $C_p$ are bounded away from zero for $p$ close to 3. It could be proved without explicit bound by applying some sort of continuity of $\psi_s''$ in $p$.\\

    Let us return to the asymptotic of $C_p$ with $p \to \infty$. It follows from the proof of Lemma \ref{lemgauss} that the component of order $2^{-\frac p 2}$ comes from the term $\left(1 - a^2\right)^{\frac{p-4}{2}}$, which we bounded from below by $2^{-\frac{p-4}{2}}$ by the assumption $a^2 \le \frac 1 2$. However, our stability results are most interesting for the case where all the coefficients of considered Rademacher sum are small. Then the term $\left(1 - a^2\right)^{\frac{p-4}{2}}$ essentially vanishes and we stay with $C_p \sim p^4 \xrightarrow{p \to \infty} \infty$.\\

    Constants $C_p$ in Theorem \ref{diag} agree with the constants in Theorem \ref{gauss} up to multiplication by a universal constant and base under the exponent $-\frac p 2$ (with the case $n = 2$ as an exception, then our method allows us to provide an explicit constant, but at the cost of adding several lines to the proof in ``baby case", so we decided not to do it). In Theorem \ref{crit} we derived $C_3 = 2^{-6}$. Here it is not clear whether $C_p \xrightarrow{p \to 3} 0$ or not. We cannot use the continuity argument for optimal values of $C_p$ due to asymptotically worse deficit for $p = 3$.

    \subsection{Open questions}

    We stated above that assuming $a_1 \to 0$ we have $C_p \to \infty$ with $p \to \infty$, more specifically $C_p \sim p^4$. But for $p$ being an even integer one can prove that we may assume $C_p = \E|G|^p - 1$. For small $p$ this is a simple computation, but for general even integer $p$ it is more tedious and hence we decided not to present it here. We believe that the same constant would work for arbitrary $p \ge 3$.

    \begin{con}
        For $p \ge 3$ and any Rademacher sum $S = \sum_{i=1}^na_i\e_i$ such that $\sum_{i=1}^na_i^2 = 1$ we have
        \begin{align*}
            \E|S|^p \le \E|G|^p - \left(\E|G|^p - 1\right)\sum_{i=1}^na_i^4.
        \end{align*}
    \end{con}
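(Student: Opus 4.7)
The plan is to follow the Gaussian-exchange strategy from the proof of Theorem \ref{gauss}. Writing $G \stackrel{d}{=} \sum_{i=1}^n a_i G_i$ with $G_i$ i.i.d.\ standard Gaussians independent of $(\e_1, \ldots, \e_n)$ and telescoping,
\[\E|G|^p - \E|S|^p = \sum_{i=1}^n \left(\E|S^i + a_i G_i|^p - \E|S^i + a_i \e_i|^p\right),\]
where $S^i = \sum_{j < i} a_j G_j + \sum_{j > i} a_j \e_j$. The conjecture then reduces to the sharp single-coordinate inequality
\[\E|X + aG|^p - \E|X + a\e|^p \ge (\E|G|^p - 1)\,a^4\]
for every symmetric random variable $X$ independent of $G$ and $\e$ with $\E X^2 = 1 - a^2$ and every $a \in [0,1]$. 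Equality is attained at $(X,a) = (0,1)$, i.e.\ when $S$ is a single Rademacher; this rigid boundary case pins down the constant.

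I would first handle $p$ an even integer, as suggested by the author's remark. In that regime both $\E|X + aG|^p$ and $\E|X + a\e|^p$ are polynomials in the symmetric power sums $u_k = \sum a_j^{2k}$, and a cumulant expansion identifies $-(\E|G|^p - 1)u_2$ as the leading negative correction to $\E|G|^p$; higher-order corrections $u_k$ with $k \ge 3$ may be controlled by $u_k \le u_2$ (a consequence of $a_j^2 \le 1$), so the argument reduces to a finite polynomial inequality in $u_2, \ldots, u_{p/2}$. For general real $p \ge 3$, my plan is to use the identity from Lemma \ref{lemgauss}, $\E|X + aG|^p - \E|X + a\e|^p = \tfrac{1}{2}\E[\psi_X(a^2 G^2) - \psi_X(a^2)]$, combined with a second-order Taylor expansion of $\psi_X$ at $t = a^2$: since $\E(G^2-1) = 0$ and $\E(G^2-1)^2 = 2$, the linear term vanishes in expectation and the integrated remainder yields an $a^4$ deficit with coefficient expressible via $\E\psi_X''$, which would be bounded from below using the integral formula \eqref{formula1}.

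The main obstacle is matching the sharp universal constant $\E|G|^p - 1$, which is tight only at $X = 0$, $a = 1$. The pointwise bounds on $\psi_X''$ in Lemmas \ref{p4bound} and \ref{p3bound} produce deficits proportional to $\E|X|^{p-4}$, a quantity that vanishes at $X = 0$ for $p > 4$ and is singular there for $p < 4$, so a purely local Taylor argument is too weak exactly at the extremal configuration. To close this gap I would try one of two global routes: first, an interpolation in the parameter $p$ exploiting that the conjecture becomes an identity at $p = 4$, hoping to leverage monotonicity or convexity of the normalized deficit $(\E|G|^p - \E|S|^p)/(\E|G|^p - 1)$ in $p$ to reduce the general case to even integers; alternatively, a representation of $|x|^p$ as a positive linear combination of even-integer powers via a fractional integral formula, which would transfer the even-integer case linearly to all real $p \ge 3$. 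The regime $p$ near $3$ looks the most delicate, since there $\psi_X''$ essentially vanishes away from a shrinking neighborhood of $|X|$, and a separate treatment combining the concentration bounds of Section 2.2 with the explicit formula \eqref{formula2} may be necessary.
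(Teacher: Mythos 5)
This statement is an open conjecture in the paper: no proof is given, only the remark that for even integer $p$ a (omitted) computation works and that the author \emph{believes} the constant extends to all $p \ge 3$. So there is no paper argument to compare against, and your text must be judged as a stand-alone proof attempt. As such it is a programme, not a proof: the entire content is delegated to the sharp single-exchange inequality $\E|X + aG|^p - \E|X + a\e|^p \ge (\E|G|^p - 1)a^4$, and you yourself note that the tools actually available in the paper (Lemma \ref{p4bound}, Lemma \ref{p3bound}, formula \eqref{formula2}, the concentration bounds) only give deficits proportional to quantities like $\E|S|^{p-4}$ or small universal constants, which cannot reproduce the constant $\E|G|^p - 1$ near the rigid case $X = 0$, $a = 1$. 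The two "global routes" you sketch (monotonicity in $p$ of a normalized deficit; writing $|x|^p$ as a positive combination of even powers) are unsubstantiated hopes; in particular no positive even-power representation of $|x|^p$ is supplied, and even the even-integer base case is only asserted (at $p = 6$ the claim is exactly $\sum a_i^6 \le \sum a_i^4$, but for higher even $p$ the sign structure of the mixed terms has to be checked, not waved through via $u_k \le u_2$).

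More seriously, the proposed reduction cannot succeed as stated, because the per-coordinate inequality, and in fact the conjectured inequality itself, fails for $3 \le p < 4$. Take $n = 2$, $a_1 = a_2 = \tfrac{1}{\sqrt{2}}$. Then $\E|S|^p = 2^{\frac p 2 - 1}$ and $\sum_i a_i^4 = \tfrac 1 2$, so the conjectured bound reads $2^{\frac p 2} - 1 \le \E|G|^p$. At $p = 3$ this says $2\sqrt{2} - 1 \approx 1.83 \le \E|G|^3 = 2\sqrt{2/\pi} \approx 1.60$, which is false; by direct comparison it remains false on all of $[3, 4)$, with equality exactly at $p = 4$ (where the statement is the identity $\E|S|^4 = 3 - 2\sum_i a_i^4$). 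Correspondingly your single-step inequality breaks: at $p = 3$, $X = \tfrac{1}{\sqrt 2}\e_2$, $a = \tfrac{1}{\sqrt 2}$, the exchange gain is $\E\bigl|\tfrac{1}{\sqrt2}\e_2 + \tfrac{1}{\sqrt2}G\bigr|^3 - \E|S_2|^3 \approx 1.479 - 1.414 = 0.065$, well below $(\E|G|^3 - 1)a^4 \approx 0.149$. So any correct treatment must at least restrict to $p \ge 4$ or impose a smallness condition on the coefficients (the regime $a_1 \to 0$ discussed in the paper); identifying and proving the right restricted statement is precisely the open problem, and your proposal does not close it.
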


    In view of the identity \eqref{id2} the deficit obtained in Theorem \ref{diag} agrees with the result of Theorem \ref{gauss}, which is a nice feature suggesting some sort of optimality. The latter theorem holds true also for $p = 3$, which suggests that the deficit obtained in Theorem \ref{crit} should be possible to improve in the spirit of Theorem \ref{diag}. 

    \begin{con}\label{conj}
        For any Rademacher sum $S = \sum_{i=1}^na_i\e_i$ such that $\sum_{i=1}^na_i^2 = 1$ there exists a universal constant $C_3 > 0$ such that
        \begin{align*}
            \E|S|^3 \le \E|S_n|^3 - C_3\sum_{i=1}^n\left(a_i^2 - \frac 1 n\right)^2.
        \end{align*}
    \end{con}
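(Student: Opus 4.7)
The plan is to follow the T-transformation strategy of Theorem \ref{diag} and to replace Lemma \ref{lemcrit} by the sharper single-step inequality
\begin{align*}
  \E\left|\sqrt{1-\mu}a\e_1 + \sqrt{\mu}a\e_n + S\right|^3 - \E\left|\sqrt{1-\mu_0}a\e_1 + \sqrt{\mu_0}a\e_n + S\right|^3 \ge 2C_3\left(a_1^2 - \tfrac{1}{n}\right)\left(\tfrac{1}{n} - a_n^2\right),
\end{align*}
exactly matching the shape of Lemma \ref{lemdiag} for $p>3$. Once this is in place, the telescoping via identity \eqref{id1} and the reduction of the extreme regime $a_1^2 > 0.9 - \tfrac{1}{n}$ through Proposition \ref{schurmon} go through verbatim as in the proof of Theorem \ref{diag}.

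Repeating the first lines of the proof of Lemma \ref{lemcrit}, the above inequality is equivalent to
\begin{align*}
  a^4\int_\alpha^\beta\int_{1-t}^{1+t}\E\psi_S''(a^2u)\,du\,dt \ge 4C_3(\mu-\mu_0)(1-\mu-\mu_0)a^4,
\end{align*}
with $\alpha=2\sqrt{\mu_0(1-\mu_0)}$, $\beta=2\sqrt{\mu(1-\mu)}$. Since the area of the region of integration is exactly $\beta^2-\alpha^2 = 4(\mu-\mu_0)(1-\mu-\mu_0)$, it suffices to show that the \emph{average} of $\E\psi_S''(a^2u)$ over that region is bounded below by a universal constant. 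By formula \eqref{formula2} this quantity is
\begin{align*}
  \E\psi_S''(a^2u) = \frac{3}{a^3u^{3/2}}\int_0^{a\sqrt{u}} s\,\Prb(|S|\le s)\,ds,
\end{align*}
so a pointwise bound of the form $\Prb(|S|\le s)\ge c\,s$ valid on $(0,a\sqrt{u}]$ would immediately give $\E\psi_S''(a^2u)\ge c$ and conclude the argument.

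The main obstacle, anticipated in Remark \ref{BEopt}, is that Proposition \ref{BE} only supplies such a bound for $s\ge a_2$, where $a_2$ is the largest coefficient of $S$; below this threshold the distribution is genuinely atomic. The natural remedy is a Berry-Esseen or local CLT estimate: having reduced to $a_1^2 \le 0.9 - \tfrac{1}{n}$, the variance $\sigma^2 = 1 - a_1^2 - a_n^2$ of $S$ is bounded away from $0$, so Berry-Esseen gives $|\Prb(|S|\le s) - (2\Phi(s/\sigma)-1)|\le Ca_2/\sigma$, which for $s$ larger than a universal constant multiple of $a_2$ yields $\Prb(|S|\le s)\gtrsim s/\sigma$, and hence recovers a universal lower bound on $\E\psi_S''(a^2u)$ whenever $a\sqrt{u}$ is not much smaller than $a_2$. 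The technical heart of the argument is to make this quantitative and to cope with the ``skewed'' regime in which $a_2$ is comparable to $a$ and the Berry-Esseen error swamps the Gaussian main term; such vectors force a bounded number of large coefficients, hence bounded $n$, and I expect to treat them by a separate compactness argument based on the fact that $\E|S|^3$ is a smooth function of $(a_i^2)$ on the simplex with strictly negative Hessian at $b^{(n)}$ in directions tangent to it.
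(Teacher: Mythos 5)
You should first be aware that the statement you are attacking is Conjecture~\ref{conj}: the paper offers no proof of it, and the authors obtain only the weaker Theorem~\ref{crit} precisely because the step you defer as ``the technical heart'' is where the $p=3$ argument degenerates. So there is no paper proof to compare with, and your text is a programme rather than a proof; the question is whether the programme can work.

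It cannot, in the form proposed, because your key single-step inequality (the $p=3$ analogue of Lemma~\ref{lemdiag}, with deficit $2C_3(a_1^2-\tfrac1n)(\tfrac1n-a_n^2)$ for one $T$-transformation) is false, so no sharpening of the anti-concentration input can rescue it. Take $k\approx\sqrt n$ coordinates equal to $c$ with $kc^2=1-\delta$, $c\approx n^{-1/4}$, $k-1$ odd, and spread the remaining tiny mass $\delta=n^{-10}$ over the other $n-k$ coordinates; note $a_1^2=c^2\le 0.9-\tfrac1n$, so your preliminary reduction does not exclude this case. Then $a^2=a_1^2+a_n^2\approx c^2$, $\mu=\tfrac{1}{na^2}\approx n^{-1/2}$, $\mu_0\approx 0$, so $\beta=2\sqrt{\mu(1-\mu)}\approx 2n^{-1/4}$ and $u\in[1-t,1+t]$ stays near $1$. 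Writing $S=cT+R$ with $T$ a sum of $k-1$ Rademachers and $R$ negligible, formula \eqref{formula2} gives $\psi_S''(a^2u)=0$ unless $|S|<a\sqrt u<3c-|R|$, which forces $|T|=1$, an event of probability $\Theta(1/\sqrt k)$, and on that event $(a^2u-S^2)_+\le c^2(u-1)_++a_n^2u+2c|R|$. Hence $\E\psi_S''(a^2u)\lesssim (u-1)_+$ up to negligible terms, the average of $\E\psi_S''(a^2u)$ over your region is $O(\beta)=O(n^{-1/4})$ rather than bounded below by a constant, and the single-step left-hand side is $O\bigl(a^4\mu^{3/2}\bigr)=O\bigl(cn^{-3/2}\bigr)$ while your target right-hand side is of order $c^2/n$; their ratio is $\Theta\bigl((a\sqrt n)^{-1}\bigr)\to 0$. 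This is exactly the loss factor $\sqrt{\tfrac1n\bigl(a^2-\tfrac1n\bigr)}\,/a^2$ retained in Lemma~\ref{lemcrit}, and Remark~\ref{BEopt} is the paper's indication that the obstruction is intrinsic to the distribution of $S$, not an artifact of Proposition~\ref{BE}; your Berry--Esseen refinement only recovers $\Prb(|S|\le s)\gtrsim s$ above the atomic scale, which in this example is comparable to $a$ itself. Your fallback is also unsound: vectors with $a_2$ comparable to $a$ do not force a bounded number of large coefficients or bounded $n$ (the example has $a_2=a_1\approx a$ with $n\to\infty$), so the compactness/Hessian argument has no regime left to cover, and in any case it would only yield an $n$-dependent constant. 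If the conjecture is true, the missing deficit must be harvested globally across the whole sequence of $T$-transformations (or via a different decomposition), not from each step separately.
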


    The natural question is about stability of optimal constants in Khintchine inequality in other cases, uncovered yet. Knowing the differences between the known cases $C_{2, 1}$ and $C_{p, 3}$ it is hard to state a precise conjecture, the question seems to be largely open. In the case $C_{p, 3}$ we would conjecture the ``distributional stability" in the spirit of \cite{ENT1}.

    \begin{con}
        For $p \ge 3$, i.i.d. symmetric random variables $X_1, \ldots, X_n$ close (in some sense) to the Rademacher distribution with $\E|X_1|^2 = 1$ and any $a_1, \ldots, a_n$ such that $\sum_{i=1}^na_i^2$ (perhaps with additional assumptions) the inequality
        \begin{align*}
            \E\left|\sum_{i=1}^na_iX_i\right|^p \le \E|G|^p \quad \text{or} \quad \E\left|\sum_{i=1}^na_iX_i\right|^p \le n^{-\frac p 2}\E\left|\sum_{i=1}^nX_i\right|^p.
        \end{align*}
        holds.
    \end{con}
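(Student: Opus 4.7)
The plan is to reduce the conjecture to the already-established Rademacher stability theorems via the mixture representation: every i.i.d.\ symmetric family $X_1, \ldots, X_n$ with $\E X_i^2 = 1$ can be written as $X_i \stackrel{d}{=} \e_i R_i$, where $R_i \ge 0$ are i.i.d.\ and independent of the Rademacher variables $\e_1, \ldots, \e_n$, with $\E R_i^2 = 1$. The Rademacher case is exactly $R_i \equiv 1$, so a natural quantitative version of ``closeness to the Rademacher distribution'' is a smallness condition on $\Var(R_1^2) = \E R_1^4 - 1$.

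For the Gaussian bound, I would condition on $R = (R_1, \ldots, R_n)$; then the inner expectation is a Rademacher sum with coefficients $a_iR_i$ whose squared $\ell^2$-norm is $Q := \sum_{i=1}^n a_i^2 R_i^2$ (with $\E Q = 1$). Rescaling by $\sqrt{Q}$ and invoking Theorem~\ref{gauss} gives
\begin{align*}
\E_{\e}\Bigl|\sum_{i=1}^n a_iR_i\e_i\Bigr|^p \le Q^{p/2}\E|G|^p - C_p\, Q^{(p-4)/2}\sum_{i=1}^n a_i^4 R_i^4,
\end{align*}
and after integrating over $R$ the target inequality $\E|\sum_i a_iX_i|^p \le \E|G|^p$ reduces to
\begin{align*}
\E|G|^p\bigl(\E Q^{p/2} - 1\bigr) \le C_p\,\E\Bigl[Q^{(p-4)/2}\sum_{i=1}^n a_i^4 R_i^4\Bigr].
\end{align*}
A Taylor expansion of $t \mapsto t^{p/2}$ around $t=1$ gives $\E Q^{p/2} - 1 \approx \tfrac{p(p-2)}{8}\Var(Q) = \tfrac{p(p-2)}{8}\Var(R_1^2)\sum_i a_i^4$, while the right-hand side is of order $C_p\sum_i a_i^4$; both sides scale like $\sum_i a_i^4$, so the inequality will hold once $\Var(R_1^2)$ is bounded by an explicit constant depending only on $p$. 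Making the Taylor remainder rigorous requires controlling higher moments of $R_1^2$, so natural hypotheses are a uniform bound $|X_i| \le M$ or a sub-Gaussian tail on $R_1$.

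For the diagonal inequality a parallel reduction applies: condition on $R$, use Theorem~\ref{diag} to compare the conditional $p$-th moment of $\sum a_iR_i\e_i$ with that of a diagonal Rademacher sum via the deficit $\sum_i (a_i^2R_i^2/Q - 1/n)^2$, and then rewrite $n^{-p/2}\E|\sum X_i|^p$ as an average over $R$ of a $p$-th moment of a Rademacher sum with the non-equal coefficients $R_i/\sqrt{n}$. The comparison then boils down to a Schur-type inequality in expectation between the vectors $(a_i^2R_i^2/Q)$ and $(R_i^2/\sum_j R_j^2)$, which does not hold deterministically; I would expect it to close under an additional assumption such as $\max_i a_i^2 \le c/n$ for an absolute $c$, combined with a bound on $\E R_1^4$.

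The main obstacle is identifying the correct quantitative notion of closeness so that the whole argument closes. The mixture representation is the cleanest route, but it imposes closeness at the level of $R_i^2$, which may be unnecessarily strong. For distributions of the form $X_i = \e_i(1+\delta Y_i)$ with $Y_i$ symmetric and small, a Lindeberg-style swap --- replacing $X_i$ by $\e_i$ one at a time and controlling each swap by a localized Taylor remainder in the spirit of Lemma~\ref{lemgauss} --- would likely give sharper conditions; combining the two viewpoints to cover a broad, natural class of distributions, and handling the diagonal bound where the right-hand side itself depends on the distribution of $R$, is where I would expect the real difficulty to lie.
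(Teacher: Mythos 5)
The statement you are addressing is not a theorem of the paper: it is posed there as an open conjecture, deliberately left imprecise (``close in some sense'', ``perhaps with additional assumptions''), and the paper contains no proof of it. Your submission is likewise not a proof but a programme, and it has genuine gaps. On the Gaussian half your reduction is sound: writing $X_i \stackrel{d}{=} \e_i R_i$ with $R_i = |X_i|$ independent of $\e_i$ (valid by symmetry), conditioning on $R$, rescaling by $\sqrt{Q}$ with $Q = \sum_i a_i^2R_i^2$ and invoking Theorem \ref{gauss} does give $\E\left|\sum_i a_iX_i\right|^p \le \E\left[Q^{p/2}\right]\E|G|^p - C_p\E\left[Q^{(p-4)/2}\sum_i a_i^4R_i^4\right]$, so the target inequality correctly reduces to $\E|G|^p\left(\E Q^{p/2}-1\right) \le C_p\E\left[Q^{(p-4)/2}\sum_i a_i^4R_i^4\right]$, and your leading-order computation (both sides of order $\sum_i a_i^4$, the left one weighted by $\Var(R_1^2)$) is correct. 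But everything that would turn this into a theorem is missing: you do not control the Taylor remainder of $\E Q^{p/2}$ (which brings in $\E(Q-1)^k$ for $k \ge 3$, hence higher moments of $R_1^2$ and possibly $\max_i a_i^2$), you do not handle the correlation between $Q^{(p-4)/2}$ and $R_i^4$, and for $p > 4$ the needed lower bound on $\E\left[Q^{(p-4)/2}R_i^4\right]$ fails without an assumption keeping $R_1$ away from $0$; you flag these points yourself but do not resolve them, nor do you state a precise closeness hypothesis under which the constants actually close.

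The diagonal half is where the proposal genuinely breaks down rather than merely being unfinished. There the right-hand side $n^{-p/2}\E\left|\sum_i X_i\right|^p$ depends on the law of $R$ itself, so Theorem \ref{diag}, which compares a Rademacher sum with the diagonal Rademacher sum $S_n$, cannot control both sides of the comparison; you reduce matters to a Schur-type inequality in expectation between the random weight vectors $\left(a_i^2R_i^2/Q\right)_i$ and $\left(R_i^2/\sum_jR_j^2\right)_i$, concede that it fails deterministically, and offer no mechanism (beyond hoping for hypotheses such as $\max_i a_i^2 \le c/n$ and a bound on $\E R_1^4$) by which averaging over $R$ would restore it. So neither inequality of the conjecture is established. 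As a research direction the mixture/conditioning reduction is reasonable and would, if completed, also supply the missing precise formulation of ``closeness'' (for instance smallness of $\Var(R_1^2) = \E X_1^4 - 1$ together with boundedness or a sub-Gaussian tail), but as it stands the conjecture remains open, both in the paper and after your proposal.
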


    \vspace{1em}

    \noindent \textbf{Acknowledgement.} I would like to thank Piotr Nayar for many helpful discussions and Daniel Murawski for suggesting the proof of Proposition \ref{opt}.

    In the first version of this paper Theorem \ref{diag} was proved with worse than present one deficit $C_p\sum_{i=1}^n\left(a_i^2 - \frac 1 n\right)_+\left(\frac 1 n - a_{n+1-i}^2\right)_+$ and Conjecture \ref{conj} was formulated for all $p \ge 3$ instead of $p = 3$ only, i.e. Conjecture \ref{conj} for $p > 3$ became the new version of Theorem \ref{diag}. The only missing step was observing the identity \eqref{id1} an applying its natural consequences, thus I decided to modify the present (still unpublished) paper instead of writing new one. I would like to thank Krzysztof Oleszkiewicz for discussion which helped me to observe this improvement.

    \vspace{1em}

    \noindent Institute of Mathematics \\
    University of Warsaw \\
    02-097, Warsaw, Poland \\
    jj406165@mimuw.edu.pl
	
\end{document}